\newenvironment{myabstract}{\par\noindent
{\bf Abstract . } \small }
{\par\vskip8pt minus3pt\rm}
\newcounter{item}[section]
\newcounter{kirshr}
\newcounter{kirsha}
\newcounter{kirshb}
\newenvironment{enumroman}{\setcounter{kirshr}{1}
\begin{list}{(\roman{kirshr})}{\usecounter{kirshr}} }{\end{list}}
\newenvironment{enumarab}{\setcounter{kirshb}{1}
\begin{list}{(\arabic{kirshb})}{\usecounter{kirshb}} }{\end{list}}
\newenvironment{athm}[1]{\vskip3mm\par\noindent
{\bf #1 }. \slshape }
{\upshape\par\vskip10pt minus3pt}
\newtheorem{theorem}{Theorem}[section]
\newtheorem{lemma}[theorem]{Lemma}
\newtheorem{corollary}[theorem]{Corollary}
\newtheorem{question}[theorem]{Question}
\newenvironment{demo}[1]{\noindent{\bf #1.}\upshape\mdseries}
{\nopagebreak{\hfill\rule{2mm}{2mm}\nopagebreak}\par\normalfont}
\theoremstyle{definition}
\newtheorem{example}[theorem]{Example}
\newtheorem{definition}[theorem]{Definition}
\def\Nr{{\mathfrak{Nr}}}
\def\Fr{{\mathfrak{Fr}}}
\def\Sg{{\mathfrak{Sg}}}
\def\Fm{{\mathfrak{Fm}}}
\def\CA{{\bf CA}}
\def\RCA{{\bf RCA}}
\def\(R)RA{{\bf (R)RA}}
\def\C{\mathbb{C}}
\def\A{{\mathfrak{A}}}
\def\B{{\mathfrak{B}}}
\def\C{{\mathfrak{C}}}
\def\PA{{\bf PS}}
\def\Gp{{\sf Gp}}
\def\PA{{\bf PA}}
\def\CPEA{{\sf CPEA}}
\def\L{{\mathfrak{L}}}
\def\Nr{{\mathfrak{Nr}}}
\def\Fr{{\mathfrak{Fr}}}
\def\Sg{{\mathfrak{Sg}}}
\def\Fm{{\mathfrak{Fm}}}
\def\CA{{\bf CA}}
\def\RCA{{\bf RCA}}
\def\(R)RA{{\bf (R)RA}}
\def\C{\mathbb{C}}
\def\A{{\mathfrak{A}}}
\def\B{{\mathfrak{B}}}
\def\C{{\mathfrak{C}}}
\def\M{{\mathfrak{M}}}
\def\PA{{\bf PA}}
\def\P{{\mathfrak P}}
\def\At{{\sf At}}
\def\PA{{\sf PA}}
\def\RPA{{\sf RPA}}
\def\Cm{{\sf Cm}}
\title{An atomic polyadic algebra of infinite dimension is completely representable if and only if is completely additive} 
\author{Tarek Sayed Ahmed\\
Department of Mathematics, Faculty of Science,\\ 
Cairo University, Giza, Egypt.
  }
\begin{document}
\maketitle

\begin{myabstract}  Answering a question posed by Hodkinson, we show that for infinite ordinals $\alpha$, every atomic polyadic algebra of 
dimension $\alpha$ $(\PA_{\alpha})$
is completely representable if and only if it is completely additive. 
We readily infer that the class of completely representable $\PA_{\alpha}$s is elementary.
This is in sharp contrast to the cylindric case.
However, we do not settle the question as to whether there are atomic polyadic algebras that are not completely additive, hence not completely
representable.
In this connection, we could only show that the  proper reduct of polyadic algebras, obtained by discarding all non bijective transformations, 
with the exception of replacements, is not completely additive.
Our proof of the equivalence in the title uses 
a neat embedding theorem together with a simple topological argument, namely, that principal ultrafilters in the Stone space of a 
Boolean algebra lie outside nowhere dense sets, and if the algebra is atomic they form a dense subset of the Stone topology. 
An analogous result is proved for many modifications of polyadic algebras. 
In all cases the signature contains all substitutions, so that the cylindrifier free reduct
of such algebras, can be viewed as a transformation system.
Finally, we give a metalogical
interpretation to our algebraic result, which is a Vaught's theorem for Keisler's logic.  Certain atomic theories have atomic models. 
Based on work of Ferenczi, we discuss various metalogical property for reducts of Keisler's logic endowed with equality. 
In particular, for such logic, we show that any atomic theory has an atomic model.
Further generalizations are discussed.
\footnote{ 2000 {\it Mathematics Subject Classification.} Primary 03G15. Secondary 03C05, 03C40

keywords: Algebraic Logic, polyadic algebras, complete representations}
 
\end{myabstract}

Polyadic algebras were introduced by Halmos \cite{Halmos} to provide an algebraic reflection
of the study of first order logic without equality. Later the algebras were enriched by 
diagonal elements to permit the discussion of equality. 
That the notion is indeed an adequate reflection of first order logic was 
demonstrated by Halmos' representation theorem for locally finite polyadic algebras 
(with and without equality). Daigneault and Monk  
proved a strong extension of Halmos' theorem, namely that,  
every polyadic algebra of infinite dimension (without equality) is representable \cite{DM}. 

There are several types of representations in algebraic logic. Ordinary representations are just isomorphisms from boolean algebras
with operators to a more concrete structure (having the same similarity type) 
whose elements are sets endowed with set-theoretic operations like intersection and complementation.
Complete representations, on the other hand, are representations that preserve arbitrary conjunctions whenever defined.
The notion of complete representations has turned out to be very interesting for cylindric algebras, where it is proved in \cite{HH}
that the class of completely representable algebras is not elementary. 

Lately, it has become fashionable to study representations  that preserve infinitary meets and joins. 
This phenomena is extensively discussed in \cite{Sayed}, where it is shown that it
has affinity with the algebraic notion of complete representations  for cylindric like algebras
and atom-canonicity in varieties of Boolean algebras with operators, 
a prominent persistence property studied in modal logic. 

The typical question is: given an  algebra  and a set of meets, is there a representation  that carries 
this set of meets to set theoretic intersections? 
(assuming that our semantics is specified by set algebras, with the concrete Boolean operation of intersection among
its basic operations.) 
When the algebra in question is countable, and we have countably many meets; 
this is an algebraic version of an  omitting types theorem; the representation omits the given set meets or non-principal types.  
When it is only one meet consisting of co-atoms, in an atomic algebra, this representation is a complete one.

The correlation of atomicity to complete representations has caused a lot of confusion in the past.
It was mistakenly thought for a long time, among algebraic logicians,  that atomic representable relation and cylindric algebras
are completely representable, an error attributed to Lyndon and now referred to as Lyndon's error.

For boolean algebras, however  this is true.  The class of completely representable algebras
is simply the class of atomic ones. An analogous result holds for certain countable reducts of polyadic algebras \cite{s}.
The notion of complete representations has been linked to Martin's axiom, omitting types theorems 
and existence of atomic models in model theory \cite{t}, \cite{ANS}, \cite{Moh4}. Such connections w
ill be worked out below in the context of Keisler's logic
the infinitary logic corresponding to $\PA_{\alpha}$. This logic allows formulas of infinite length and infinite quantification.

In this paper we show that an atomic polyadic algebra of infinite dimension is also completely representable, if and only if it is completely additive.
From this we conclude that the class of completely representable algebras is elementary by just spelling out first order formulas, 
one for each substitution stipulating that it is completely additive. This gives continuum many formulas, however, they
share one schema.

Our result is in sharp and, indeed, interesting contrast to the cylindric and quasi-polyadic equality cases
\cite{HH}, \cite{Moh1} (these are completely additive varieties). This result also adds to the long list of results
existing in the literature, further emphasizing, the commonly accepted view 
that cylindric algebras and polyadic algebras belong to different paradigms often exhibiting 
conflicting behaviour.
It also  answers a question raised by Ian Hodkinson, 
see p. 260 in \cite{h} and Remark 6.4, p. 283 in op.cit, and a question in \cite{1}, though admittedly the latter is 
ours \cite{Sayed}.

\section{ Main result}

Our notation is in conformity with \cite{1} which is based on the notation \cite{HMT2}. Hoever, we chose to deviate
from \cite{HMT2}, when we felt that it was compelling to reverse the order. 
We write $f\upharpoonright A$ instead of $A\upharpoonright f$, 
for the restriction of a function $f$ to 
a set $A,$ which is the more usual standard notation. 
On the other hand, following \cite{HMT2}, for given sets $A, B$ we write $A\sim B$ for the set $\{x\in A: x\notin B\}$.
Gothic letters are used for algebras, and the corresponding Roman letter will denote their universes.
Also for an algebra $\A$ and $X\subseteq A$, $\Sg^{\A}X$, or simply $\Sg X$ when $\A$ is clear from context, denotes the subalgebra of $\A$ 
generated by $X$. $Id$ denotes the identity function and when we write $Id$ we will be tacitly assuming that its domain is clear from context.
We now recall the definition of polyadic algebras as formulated in \cite{HMT2}. 
Unlike Halmos' formulation, the dimension of algebras is specified by ordinals as opposed to 
arbitrary sets.

\begin{definition} Let $\alpha$ be an ordinal. By a {\it polyadic algebra} of dimension $\alpha$,  
or a $\PA_{\alpha}$ for short,
we understand an algebra of the form
$$\A=\langle A,+,\cdot ,-,0,1,{\sf c}_{(\Gamma)},{\sf s}_{\tau} \rangle_{\Gamma\subseteq \alpha ,\tau\in {}^{\alpha}\alpha}$$
where ${\sf c}_{(\Gamma)}$ ($\Gamma\subseteq  \alpha$) and ${\sf s}_{\tau}$ ($\tau\in {}^{\alpha}\alpha)$ are unary 
operations on $A$, such that postulates  
below hold for $x,y\in A$, $\tau,\sigma\in {}^{\alpha}\alpha$ and 
$\Gamma, \Delta\subseteq \alpha$ 

\begin{enumerate}
\item $\langle A,+,\cdot, -, 0, 1\rangle$ is a boolean algebra
\item ${\sf c}_{{(\Gamma)}}0=0$

\item $x\leq {\sf c}_{{(\Gamma)}}x$

\item ${\sf c}_{(\Gamma)}(x\cdot {\sf c}_{(\Gamma)}y)={\sf c}_{(\Gamma)}x \cdot {\sf c}_{(\Gamma)}y$

\item ${\sf c}_{(\Gamma)}{\sf c}_{(\Delta)}x={\sf c}_{(\Gamma\cup \Delta)}x$

\item ${\sf s}_{\tau}$ is a boolean endomorphism

\item ${\sf s}_{Id}x=x$

\item ${\sf s}_{\sigma\circ \tau}={\sf s}_{\sigma}\circ {\sf s}_{\tau}$

\item if $\sigma\upharpoonright (\alpha\sim \Gamma)=\tau\upharpoonright (\alpha\sim \Gamma)$, then 
${\sf s}_{\sigma}{\sf c}_{(\Gamma)}x={\sf s}_{\tau}{\sf c}_{(\Gamma)}x$


\item If $\tau^{-1}\Gamma=\Delta$ and $\tau\upharpoonright \Delta $ is
one to one, then ${\sf c}_{(\Gamma)}{\sf s}_{\tau}x={\sf s}_{\tau}{\sf c}_{(\Delta )}x$.

\end{enumerate}

\end{definition}
We will sometimes add superscripts to cylindrifiers and substitutions indicating the algebra they are evaluated in.
The class of representable algebras is defined via set - theoretic operations
on sets of $\alpha$-ary sequences. Let $U$ be a set. 
For $\Gamma\subseteq \alpha$ and $\tau\in {}^{\alpha}\alpha$, we set
$${\sf c}_{(\Gamma)}X=\{s\in {}^{\alpha}U: \exists t\in X,\ \forall j\notin \Gamma, t(j)=s(j) \}$$
and 
$${\sf s}_{\tau}X=\{s\in {}^{\alpha}U: s\circ \tau\in X\}.$$
For a set $X$, let $\B(X)$ be the boolean set algebra $(\wp(X), \cup, \cap, \sim).$
The class of representable polyadic algebras, or ${\RPA}_{\alpha}$ for short, is defined by
$$SP\{\langle \B(^{\alpha}U), {\sf c}_{(\Gamma)}, {\sf s}_{\tau} \rangle_{\Gamma\subseteq \alpha, \tau\in {}^{\alpha}\alpha}:
\ \,  U \text { a set }\}.$$
Here $SP$ denotes the operation of forming subdirect products. It is straightforward to show that ${\RPA}_{\alpha}\subseteq \PA_{\alpha}.$ 
Daigneault and Monk \cite{DM} proved that for $\alpha\geq \omega$ the converse inclusion also holds, 
that is ${\RPA}_{\alpha}=\PA_{\alpha}.$ 
This is a completeness theorem for certain infinitary extensions of first order logic 
without equality \cite{K}.

In this paper we are concerned with the following question:
If $\A$ is a polyadic algebra,  is there a 
representation of $\A$ that preserves infinite meets and joins, whenever they exist? 
(A representation of a given abstract algebra is basically a non-trival 
homomorphism from this algebra into a set algebra).
To make the problem more tangible we need to prepare some more.
In what follows $\prod$ and $\sum$ denote infimum
and supremum, respectively. We will encounter situations where we need to evaluate a supremum of a 
given set in more than one algebra, in which case we will add a superscript to the supremum indicating the algebra we want. 
For set algebras, we identify notationally the algebra with its universe, since the operations 
are uniquely defined given the unit of the algebra.

Let $\A$ be a polyadic algebra and $f:\A\to \wp(^{\alpha}U)$ be a representation of $\A$.
If $s\in X$, we let
$$f^{-1}(s)=\{a\in \A: s\in f(a)\}.$$
An atomic representation $f:\A\to \wp(^{\alpha}U)$ is a representation such that for each 
$s\in V$, the ultrafilter $f^{-1}(s)$ is principal. 
A complete representation of $\A$ is a representation $f$ satisfying
$$f(\prod X)=\bigcap f[X]$$
whenever $X\subseteq \A$ and $\prod X$ is defined.

A completely additive boolean algebra with operators is one for which all extra non-boolean operations preserve arbitrary joins.

\begin{lemma}\label{r} Let $\A\in \PA_{\alpha}$. A representation $f$ of $\A$
is atomic if and only if it is complete. If $\A$ has a complete representation, then it is atomic and is completely additive.
\end{lemma}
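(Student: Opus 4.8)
The plan is to treat the two biconditionals by first recording the elementary fact that for any representation $f$ and any $s$ in the base, $f^{-1}(s)=\{a\in\A: s\in f(a)\}$ is an ultrafilter of the Boolean reduct, and that such an ultrafilter is principal exactly when it contains an atom (an ultrafilter containing an atom $p$ must equal $\{x:x\ge p\}$ by maximality). For the easy direction, that an atomic $f$ is complete, I would fix $X\subseteq\A$ with $\prod X$ defined and prove the two inclusions: $f(\prod X)\subseteq\bigcap f[X]$ is immediate from monotonicity of $f$, since $\prod X\le x$ for every $x\in X$; conversely, if $s\in\bigcap f[X]$ then $X\subseteq f^{-1}(s)$, and as $f^{-1}(s)$ is principal, generated by an atom $p$, we get $p\le x$ for all $x\in X$, hence $p\le\prod X$, so $\prod X\in f^{-1}(s)$ and $s\in f(\prod X)$.

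For the converse, that a complete $f$ is atomic, I would argue by contradiction. Fix $s$ and suppose $F=f^{-1}(s)$ is non-principal. Since $s\in f(a)$ for every $a\in F$, we have $s\in\bigcap f[F]$, so $\bigcap f[F]\ne\emptyset$. The guiding idea is that $\prod F$, if it exists, must equal $0$: a non-principal ultrafilter has no least element, so its infimum cannot lie in $F$, and an infimum $b$ lying outside an ultrafilter satisfies $b\le -b$, hence $b=0$. Completeness would then give $f(0)=f(\prod F)=\bigcap f[F]\ne\emptyset$, which is absurd. \textbf{The main obstacle is exactly that $\prod F$ need not exist in an abstract (non-complete) Boolean algebra}, so this direct computation does not by itself close the argument. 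Here I would invoke the topological observation flagged in the abstract: passing to the Stone space of the Boolean reduct, principal ultrafilters are isolated points and hence lie outside every nowhere dense set, whereas a set of the form $\{G:X\subseteq G\}=\bigcap_{a\in X}\hat a$ with $\prod X=0$ is closed with empty interior, i.e. nowhere dense. The task is thus to locate, inside the non-principal $F$, a family $X\subseteq F$ whose meet genuinely exists and equals $0$; failing this, one shows that the interval below some element of $F$ is atomless, and such an atomless part is detected by completeness through a descending family with infimum $0$ that still meets at $s$, giving the contradiction. The same step simultaneously yields that $\A$ is atomic: once every $f^{-1}(s)$ is known to contain an atom, $\sum\At\A=1$, and completeness forces $\bigcup_{p\in\At\A}f(p)=f(1)=V$, so every point of the base is covered by the image of an atom.

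It then remains to record the last two assertions. That $\A$ is atomic follows once $f$ is atomic and faithful: given $0\ne a$, choose $s\in f(a)$, let $p$ be the atom generating $f^{-1}(s)$, and note that $p\cdot a\in f^{-1}(s)$ forces $p\cdot a\ne 0$, whence $p\le a$. For complete additivity I would use that the concrete operations are visibly completely additive, since ${\sf c}_{(\Gamma)}$ distributes over arbitrary unions and ${\sf s}_\tau$ is an inverse image. Thus if $\sum^{\A}X=a$ exists, completeness gives $f(a)=\bigcup f[X]$; applying a concrete operation $O\in\{{\sf c}_{(\Gamma)},{\sf s}_\tau\}$ yields $f(Oa)=\bigcup_{x\in X}f(Ox)$, and a least-upper-bound argument using faithfulness (if $Ox\le u$ for all $x$, then $f(Oa)\subseteq f(u)$, so $Oa\le u$) shows $Oa=\sum^{\A}O[X]$, i.e. $O$ preserves the join. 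Hence every non-Boolean operation is completely additive, completing the proof.
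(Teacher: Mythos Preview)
Your argument for the direction atomic $\Rightarrow$ complete is correct and standard. The trouble is in the converse, where you have manufactured an obstacle that is not there. You write that ``$\prod F$ need not exist in an abstract (non-complete) Boolean algebra,'' but your own reasoning already shows that it does exist and equals $0$ whenever $F$ is non-principal. The point concerns arbitrary \emph{lower bounds}, not a putative infimum: if $b$ is any lower bound of $F$ and $b\notin F$, then $-b\in F$, so $b\le -b$ and $b=0$; if instead $b\in F$, then $b$ is the least element of $F$, and such a least element must be an atom (for $0<c<b$ gives $c\notin F$, hence $b\le -c$, hence $c=b\cdot c=0$), making $F$ principal. Thus a non-principal $F$ has no nonzero lower bound, i.e.\ $\prod F=0$, and the contradiction $s\in\bigcap f[F]=f(\prod F)=f(0)=\emptyset$ goes through immediately. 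Your Stone-space detour and the search for ``a family $X\subseteq F$ whose meet genuinely exists'' are therefore unnecessary, and as written are too vague to count as a proof. Once this is repaired, your derivation that $\A$ itself is atomic is fine, though you should make explicit that it uses injectivity of $f$ (so that $a\ne 0$ forces $f(a)\ne\emptyset$).

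For complete additivity your route genuinely differs from the paper's. The paper exploits that $\PA_\alpha$ is a discriminator variety with term ${\sf c}_{(\alpha)}$, reduces to the simple case where the representation is an injection, and argues by contradiction that $\sum {\sf s}_\tau X=1$ whenever $\sum X=1$. You instead argue directly: the concrete operations ${\sf c}_{(\Gamma)}$ and ${\sf s}_\tau$ in a set algebra distribute over arbitrary unions, so from $f(a)=\bigcup f[X]$ one gets $f(Oa)=\bigcup f[OX]$, and then an order-embedding argument recovers $Oa=\sum O[X]$. This is more elementary and avoids the semi-simplicity machinery; the price is that the last step (``by faithfulness'') should be spelled out: $f(Oa)\subseteq f(u)$ gives $f(Oa\cdot{-u})=\emptyset$, and injectivity of $f$ then yields $Oa\cdot{-u}=0$, i.e.\ $Oa\le u$.
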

\begin{demo}{Proof} The first part is like \cite{HH}.
For the second part, we note that $\PA_{\alpha}$ is a discriminator variety with discriminator term ${\sf c}_{(\alpha)}$.
And so because all algebras in $\PA_{\alpha}$ are semi-simple,
it suffices to show that if $\A$ is simple, $X\subseteq A$, is such that $\sum X=1$,
and there exists  an injection $f:\A\to \wp(^{\alpha}\alpha)$, such that $\bigcup_{x\in X}f(x)=V$,
then for any $\tau\in {}^{\alpha}\alpha$, we have $\sum s_{\tau}X=1$. So assume that this does not happen 
for some $\tau \in {}^{\alpha}\alpha$.
Then there is a $y\in \A$, $y<1$, and
$s_{\tau}x\leq y$ for all $x\in X$.
Now
$$1=s_{\tau}(\bigcup_{x\in X} f(x))=\bigcup_{x\in X} s_{\tau}f(x)=\bigcup_{x\in X} f(s_{\tau}x).$$
(Here we are using that $s_{\tau}$ distributes over union.)
Let $z\in X$, then $s_{\tau}z\leq y<1$, and so $f(s_{\tau}z)\leq f(y)<1$, since $f$ is injective, it cannot be the case that $f(y)=1$.
Hence, we have
$$1=\bigcup_{x\in X} f(s_{\tau}x)\leq f(y) <1$$
which is a contradiction, and we are done.

\end{demo}
By Lemma \ref{r} a necessary condition for the existence of complete representations is the condition of atomicity and complete
additivity.
We now prove a converse to this result, namely, that when $\A$ is atomic and completely additive, then 
$\A$ is completely representable. 

We need to recall from \cite[definition~5.4.16]{HMT2},  the notion of neat reducts of polyadic algebras, 
which will play a key role in our proof of the main theorem. 
\begin{definition} Let $J\subseteq \beta$ and 
$\A=\langle A,+,\cdot ,-, 0, 1,{\sf c}_{(\Gamma)}, {\sf s}_{\tau}\rangle_{\Gamma\subseteq \beta ,\tau\in {}^{\beta}\beta}$
be a $\PA_{\beta}$.
Let $Nr_J\B=\{a\in A: {\sf c}_{(\beta\sim J)}a=a\}$. Then
$$\Nr_J\B=\langle Nr_{J}\B, +, \cdot, -, {\sf c}_{(\Gamma)}, {\sf s}'_{\tau}\rangle_{\Gamma\subseteq J, \tau\in {}^{\alpha}\alpha}$$
where ${\sf s}'_{\tau}={\sf s}_{\bar{\tau}}.$ Here $\bar{\tau}=\tau\cup Id_{\beta\sim \alpha}$.
The structure $\Nr_J\B$ is an algebra, called the {\it $J$ compression} of $\B$.
When $J=\alpha$, $\alpha$ an ordinal, then $\Nr_{\alpha}\B\in \PA_{\alpha}$ and is called the {\it neat $\alpha$ reduct} of $\B$ 
and its elements are called 
$\alpha$-dimensional.
\end{definition}

The notion of neat reducts is also extensively studied for cylindric algebras \cite{neat}.
We also need, \cite [theorem~ 2.1]{DM} and top of  p.161 in op.cit:

\begin{definition} Let $\A\in \PA_{\alpha}$. 
\begin{enumroman}

\item If $J\subseteq \alpha$, an element $a\in A$ is {\it independent of $J$} if ${\sf c}_{(J)}a=a$;
$J$ supports $a$ if $a$ is independent of $\alpha\sim J$.

\item The {\it effective degree} of $\A$ is the smallest cardinal $\mathfrak{e}$ such that each element of $\A$ admits a support 
whose cardinality does not exceed $\mathfrak{e}$.
\item The {\it local degree} of $\A$ is the smallest cardinal $\mathfrak{m}$ such that each element of $\A$ has support of cardinality $<\mathfrak{m}$. 
\item The {\it effective cardinality} of $\A$ is $\mathfrak{c}=|Nr_JA|$ where $|J|=\mathfrak{e}.$ (This is independent of $J$). 
\end{enumroman}
\end{definition} 

We chose to highlight he following simple basic known facts about boolean algebras and topological spaces. 
\begin{enumarab}
\item Let $\B$ be a boolean algebra, 
and let $S$ be its Stone space whose underlying set consists of all ultrafilters of $\B$. The topological space $S$ has a clopen base 
of sets of the form $N_b=\{F\in S: b\in F\}$ for $b\in B$. Assume that $X\subseteq B$ and $c\in B$
are such that $\sum X=c$. Then the set $N_c\sim \bigcup_{x\in X}N_x$ is nowhere dense in the 
Stone topology. In particular, if $c$ is the top element, then it follows that $S\sim \bigcup_{x\in X}N_x$ is nowhere dense.
(A nowhere dense set is one whose closure has empty interior).
\item Let ${\bf X}=(X, \tau)$ be a topological space. Let $x\in X$ be an isolated point in the sense that there is an open set $G\in \tau$ containing $x$, 
such that
$G\cap X=\{x\}$. Then $x$ cannot belong to any nowhere dense subset of $X$.  
\end{enumarab}
The proofs of these very elementary facts are entirely straightforward. They follow from the basic definitions.
Now we formulate and prove the main result. The proof is basically a Henkin construction together with a simple 
topological argument. The proof also has affinity with the proofs of the main theorems in \cite{DM} and \cite{super}, 
endowed with a topological 
twist. 

The idea is simple. Start with an atomic completely additive $\A\in \PA_{\alpha}$. Then $\A$ neatly embeds into an algebra
$\B$ having  enough spare dimensions, called a dilation of $\A$, that is $\A=\Nr_{\alpha}\B$. 
As it turns out, $\B$ is also atomic, and by complete additivity the sum of all all substituted versions of the set of atoms is the top element in $\B$. 
The desired representation is built from any principal ultrafilter thet preserves this set of infinitary joins 
as well as some  infinitary joins that have to do with eliminating cylindrifiers. A principal 
ultrafilter preserving  these sets of  joins 
can always  be found because, on the one hand, the set of principal ultrafilters are dense in the Stone space of the Boolean reduct of 
$\B$ since the latter is atomic,  and on the other hand, finding an ultrafilter preserving the given set  joints amounts to finding a 
a principal ultrafilter outside a nowhere dense set corresponding to the infinitary joins. Any such ultrafilter can be used to build the desired representation.
But first a definition: 

\begin{definition} A transformation system is a quadruple of the form $(\A, I, G, {\sf S})$ where $\A$ is an 
algebra of any similarity type, 
$I$ is a non empty set (we will only be concerned with infinite sets),
$G$ is a subsemigroup of $(^II,\circ)$ (the operation $\circ$ denotes composition of maps) 
and ${\sf S}$ is a homomorphism from $G$ to the semigroup of endomorphisms of $\A$. 
Elements of $G$ are called transformations. 
\end{definition}
The cylindrfier free reducts of 
polyadic algebras can be viewed as transformation systems where $I$ is the dimension and $G={}^II.$
Now we prove our main result:

\begin{theorem}\label{main} Let $\alpha$ be an infinite ordinal. Let $\A\in \PA_{\alpha}$ be atomic and completely additive.
Then $\A$ has a complete representation.
\end{theorem}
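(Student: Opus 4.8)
By Lemma~\ref{r} an atomic representation of an algebra in $\PA_\alpha$ is automatically complete, so it suffices to manufacture an atomic representation of $\A$; the plan is to read one off from a single, carefully chosen principal ultrafilter sitting in a suitable dilation of $\A$. As in the proof of Lemma~\ref{r}, semisimplicity of $\PA_\alpha$ (it is a discriminator variety) lets me first reduce to the case where $\A$ is simple, so that any nontrivial homomorphism out of the algebras in play is automatically injective.

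First I would pass to a dilation. Using the neat embedding machinery of Daigneault and Monk, I fix an ordinal $\beta>\alpha$ with enough spare dimensions (the effective degree $\mathfrak{e}$ of $\A$ controls how many are needed to supply witnesses) and embed $\A$ into some $\B\in\PA_\beta$ with $\A=\Nr_\alpha\B$ and $\B=\Sg^{\B}A$; that is, $\B$ is generated by its $\alpha$-dimensional elements and these are precisely the members of $\A$. The crucial structural claim, and the technical heart of the argument, is that this $\B$ is again atomic: writing $\At\A$ for the atoms of $\A$, the substituted copies $\{{\sf s}_\tau a : a\in\At\A,\ \tau\in{}^{\beta}\beta\}$ remain atoms of $\B$ and still join to the unit of $\B$. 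This is exactly where complete additivity is used: since every non-Boolean operation preserves arbitrary joins, the atoms of $\A$ do not smear out when transported into the spare dimensions, so $\sum\At\B=1$ holds in $\B$.

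Next comes the topological step, carried out in the Stone space $S$ of the Boolean reduct of $\B$. I would assemble the family of infinitary joins that a representation must respect: the join $\sum\At\B=1$ coming from atomicity, together with, for each $b\in B$ and each $\Gamma\subseteq\beta$, the cylindrifier-eliminating join expressing ${\sf c}_{(\Gamma)}b$ as the supremum of its substituted instances (these joins exist precisely because $\B$ has enough spare dimensions to provide witnesses). By the first of the two Boolean/topological facts recorded above, each such join $\sum X=c$ contributes a nowhere dense set $N_c\sim\bigcup_{x\in X}N_x$ to $S$, and an ultrafilter preserves that join exactly when it lies outside this set. Because $\B$ is atomic, its principal ultrafilters are dense in $S$ and, being isolated points, each of them avoids every nowhere dense set by the second fact. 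Hence any principal ultrafilter $F$ of $\B$ simultaneously preserves all the joins in our family, for free.

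Finally I would build the representation from $F$. Setting $h(b)=\{\tau\in{}^{\beta}\beta : {\sf s}_\tau b\in F\}$ gives a homomorphism from $\B$ into $\wp({}^{\beta}\beta)$: the Boolean clauses hold because each ${\sf s}_\tau$ is a Boolean endomorphism and $F$ is an ultrafilter, while the clauses for ${\sf c}_{(\Gamma)}$ hold precisely because $F$ preserves the cylindrifier joins singled out above. Injectivity follows from simplicity, and since $F$ is principal the induced map is atomic. Restricting $h$ to $\A=\Nr_\alpha\B$ then yields an atomic representation of $\A$, which by Lemma~\ref{r} is the desired complete representation. I expect the main obstacle to be the atomicity of the dilation $\B$: verifying that complete additivity forces $\sum\At\B=1$, that the substituted atoms really are atoms of $\B$, and that the cylindrifier joins genuinely exist in $\B$ is the delicate part, whereas the subsequent topological selection of $F$ is immediate from the two elementary facts above.
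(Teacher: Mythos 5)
Your proposal follows the paper's own architecture (Daigneault--Monk dilation, atomicity of the dilation, principal ultrafilters avoiding nowhere dense sets, a substitution-defined representation, then Lemma~\ref{r}), but two of your key justifications are genuinely wrong. The step you yourself call the technical heart is misargued: the substituted copies $\{{\sf s}_{\tau}a : a\in\At\A\}$ are \emph{not} atoms of $\B$, and complete additivity is not what makes $\B$ atomic. In the Daigneault--Monk construction the Boolean reduct of $\B$ is the function algebra $F({}^{\mathfrak{n}}\alpha,\A)$, a direct power of $\A$, into which $a\in A$ embeds as the map $x\mapsto {\sf s}_{x\upharpoonright\alpha}a$; for an atom $a$ this map is non-zero at many arguments, so it properly dominates non-zero ``single-point'' functions, and it is exactly the single-point functions taking an atom of $\A$ as their one non-zero value that are the atoms of $\B$. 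Hence atomicity of $\B$ comes from this pointwise structure (shrink $a(t)$ to an atom of $\A$ at a single argument $t$) and uses no additivity hypothesis at all; the paper even warns, via its cylindric example, that atomicity of dilations is \emph{not} automatic from $\A=\Nr_{\alpha}\B$ plus generation, which is precisely why the concrete functional form of the dilation matters. Complete additivity enters elsewhere: it is what gives $\sum^{\B}\{{\sf s}_{\bar{\tau}}x : x\in\At\A\}=1$ for each $\tau\in{}^{\alpha}\mathfrak{n}$ (the paper's equation (2)), and this join is what makes the final representation atomic, since what must cover the unit are the images of the atoms of $\A$, not of $\B$. Your sketch fuses these two distinct facts into one claim that is false as stated.

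Second, the opening reduction to simple $\A$ via semisimplicity does not work in this direction. Simple quotients of an atomic, completely additive algebra need not be atomic or completely additive (quotients destroy atoms, just as $\wp(\omega)$ modulo finite sets is atomless), and even if every simple factor were completely representable, the subdirect embedding of $\A$ into their product need not preserve infinite meets, so no complete representation of $\A$ would result. The discriminator argument is legitimate in Lemma~\ref{r}, where only the \emph{necessity} of complete additivity is at stake, but not here. The paper instead fixes an arbitrary non-zero $c\in A$ and builds an atomic homomorphism $f$ with $f(c)\neq 0$ by choosing the principal ultrafilter to contain $c$; injectivity is then obtained by combining such representations over all non-zero $c$, not from simplicity. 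Relatedly, your map $h(b)=\{\tau\in{}^{\beta}\beta : {\sf s}_{\tau}b\in F\}$ tries to represent all of $\B$, which both requires cylindrifier-elimination joins for arbitrary $b\in B$ and $\Gamma\subseteq\beta$ (the paper has, and needs, them only for $p\in A$ and $\Gamma\subseteq\alpha$) and lands in an algebra of dimension $\beta$ rather than $\alpha$; the paper sidesteps both issues by representing $\A$ directly on ${}^{\alpha}\mathfrak{n}$ via $f(a)=\{\tau\in{}^{\alpha}\mathfrak{n} : {\sf s}_{\bar{\tau}}a\in F\}$.
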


\begin{demo}{Proof} Let $c\in A$ be non-zero. We will find a set $U$ and a homomorphism 
from $\A$ into the set algebra with universe $\wp(^{\alpha}U)$ that preserves arbitrary suprema
whenever they exist and also satisfies that  $f(c)\neq 0$. $U$ is called the base of the set algebra. 
Let $\mathfrak{m}$ be the local degree of $\A$, $\mathfrak{c}$ its effective cardinality and 
$\mathfrak{n}$ be any cardinal such that $\mathfrak{n}\geq \mathfrak{c}$ 
and $\sum_{s<\mathfrak{m}}\mathfrak{n}^s=\mathfrak{n}$. The cardinal 
$\mathfrak{n}$ will be the base of our desired representation.

Substitutions in $\A$, induce a homomorphism 
of semigroups $S:{}^\alpha\alpha\to End(\A)$, via $\tau\mapsto {\sf s}_{\tau}$.  
The operation on both semigroups is composition of maps; the latter is the semigroup of endomorphisms on 
$\A$. For any set $X$, let $F(^{\alpha}X,\A)$ 
be the set of all functions from $^{\alpha}X$ to $\A$ endowed with boolean  operations defined pointwise and for 
$\tau\in {}^\alpha\alpha$ and $f\in F(^{\alpha}X, \A)$, put ${\sf s}_{\tau}f(x)=f(x\circ \tau)$. 
This turns $F(^{\alpha}X,\A)$ to a transformation system as well. 
The map $H:\A\to F(^{\alpha}\alpha, \A)$ defined by $H(p)(x)={\sf s}_xp$ is
easily checked to be an isomorphism. Assume that $\beta\supseteq \alpha$. Then $K:F(^{\alpha}\alpha, \A)\to F(^{\beta}\alpha, \A)$ 
defined by $K(f)x=f(x\upharpoonright \alpha)$ is an isomorphism. These facts are straighforward to establish, cf. theorem 3.1, 3.2 
in \cite{DM}. 
Call $F(^{\beta}\alpha, \A)$ a minimal functional dilation of $F(^{\alpha}\alpha, \A)$. 
Elements of the big algebra, or the (cylindrifier free) 
functional dilation, are of form ${\sf s}_{\sigma}p$,
$p\in F(^{\beta}\alpha, \A)$ where $\sigma$ is one to one on $\alpha$, cf. \cite{DM} theorem 4.3-4.4.

We can assume that $|\alpha|<\mathfrak{n}$.
Let $\B$ be the algebra obtained from $\A$, by discarding its cylindrifiers, then taking  a minimal functional dilation 
and then re-defining cylindrifiers in the bigger algebra, by setting for each $\Gamma\subseteq \mathfrak{n}:$
$${\sf c}_{(\Gamma)}{\sf s}_{\sigma}^{\B}p={\sf s}_{\rho^{-1}}^{\B} {\sf c}_{\rho(\{(\Gamma)\}\cap \sigma \alpha)}^{\A}{\sf s}_{(\rho\sigma\upharpoonright \alpha)}^{\A}p.$$
Here $\rho$ is a any permutation such that $\rho\circ \sigma(\alpha)\subseteq \sigma(\alpha.)$
The definition is sound, that is, it is independent of $\rho, \sigma, p$; furthermore, it agrees with the old cylindrifiers in $\A$.
Identifying algebras with their transformation systems
we have $\A\cong \Nr_{\alpha}\B$, via $H$ defined 
for $f\in \A$ and $x\in {}^{\beta}\alpha$ by, 
$H(f)x=f(y)$ where $y\in {}^{\alpha}\alpha$ and $x\upharpoonright \alpha=y$, 
cf. \cite{DM} theorem 3.10.

The local degree of $\B$ is the same as that of $\A$, 
in particular, each $x\in \B$ admits a support of cardinality $<\mathfrak{m}$. Furthermore, $|\mathfrak{n}\sim \alpha|=|\mathfrak{n}|$ and
for all $Y\subseteq A$, we have $\Sg^{\A}Y=\Nr_{\alpha}\Sg^{\B}Y.$ 
All this can be found in \cite{DM}, see the proof of theorem 1.6.1 therein; in such a proof, 
$\B$ is called a minimal dilation of $\A$, due to the fact that $\B$ is unique up to isomorphisms that fix $\A$ pointwise. 
 
We show that, like $\A$, $F=F(^{\mathfrak{m}}\alpha, \A)$, hence the boolean reduct of $\B$, is atomic.
Let $a$ be a non-zero element in $F$. Then $a:{}^{\mathfrak{m}}\alpha\to \A$, such that $a(t)\neq 0$ for some $t\in {}^{\mathfrak{m}}\alpha$.
Choose an atom $b(t)$ in $\A$ below $a(t)$ and define $b:{}^{\mathfrak{m}}\alpha\to \A$ 
via $t\mapsto b(t)$, and otherwise $b(t)=0$. 
Then clearly $b$ is an atom in $F$ below $a$, and so $\B$ is atomic.
(Note that the fact that $\A$ is the full neat reduct of $\B$ and that $\A$ generates $\B$ is not enough to show that atomicity 
of $\A$, implies that of $\B$ (see example \ref{ex} below). 
But in the context of polyadic algebras we are lucky; the dilations of atomic algebras are constructed in such a way 
that they are atomic, as well.) 

Let $\Gamma\subseteq \alpha$ and $p\in \A$. Then in $\B$ we have, see \cite{DM} the proof of theorem 1.6.1,  
\begin{equation}\label{tarek1}
\begin{split}
{\sf c}_{(\Gamma)}p=\sum\{{\sf s}_{\bar{\tau}}p: \tau\in {}^{\alpha}\mathfrak{n},\ \  \tau\upharpoonright \alpha\sim\Gamma=Id\}.
\end{split}
\end{equation}
Here, and elsewhere throughout the paper,  for a transformation $\tau$ wth domain $\alpha$ and range included in $\mathfrak{n}$,
$\bar{\tau}=\tau\cup Id_{\mathfrak{n}\sim \alpha}$. 
Let $X$ be the set of atoms of $\A$. Since $\A$ is atomic, then  $\sum^{\A} X=1$. By $\A=\Nr_{\alpha}\B$, we also have $\sum^{\B}X=1$.
By complete additivity we have 
for all $\tau\in {}^{\alpha}\mathfrak{n}$ we have,
\begin{equation}\label{tarek2}
\begin{split}
\sum {\sf s}_{\bar{\tau}}^{\B}X=1.
\end{split}
\end{equation}
Let $S$ be the Stone space of $\B$, whose underlying set consists of all boolean ulltrafilters of 
$\B$. Let $X^*$ be the set of principal ultrafilters of $\B$ (those generated by the atoms).  
These are isolated points in the Stone topology, and they form a dense set in the Stone topology since $\B$ is atomic. 
So we have $X^*\cap T=\emptyset$ for every nowhere dense set $T$ (since principal ultrafilters, which are isolated points in the Stone topology,
lie outside nowhere dense sets). 
For $a\in \B$, let $N_a$ denote the set of all boolean ultrafilters containing $a$.
Now  for all $\Gamma\subseteq \alpha$, $p\in B$ and $\tau\in {}^{\alpha}\mathfrak{n}$, we have,
by the suprema, evaluated in (1) and (2):
\begin{equation}\label{tarek3}
\begin{split}
G_{\Gamma,p}=N_{{\sf c}_{(\Gamma)}p}\sim \bigcup_{{\tau}\in {}^{\alpha}\mathfrak{n}} N_{s_{\bar{\tau}}p}
\end{split}
\end{equation}
and
\begin{equation}\label{tarek4}
\begin{split}
G_{X, \tau}=S\sim \bigcup_{x\in X}N_{s_{\bar{\tau}}x}.
\end{split}
\end{equation}
are nowhere dense. 
Let $F$ be a principal ultrafilter of $S$ containing $c$. 
This is possible since $\B$ is atomic, so there is an atom $x$ below $c$; just take the 
ultrafilter generated by $x$.
Then $F\in X^*$, so $F\notin G_{\Gamma, p}$, $F\notin G_{X,\tau},$ 
for every $\Gamma\subseteq \alpha$, $p\in A$
and $\tau\in {}^{\alpha}\mathfrak{n}$.
Now define for $a\in A$
$$f(a)=\{\tau\in {}^{\alpha}\mathfrak{n}: {\sf s}_{\bar{\tau}}^{\B}a\in F\}.$$
Then $f$ is a homomorphism from $\A$ to the full
set algebra with unit $^{\alpha}\mathfrak{n}$, such that $f(c)\neq 0$. We have $f(c)\neq 0$ because $c\in F,$ so $Id\in f(c)$. 
The rest can be proved exactly as in \cite{super}; the preservation of the boolean operations and substitutions is fairly 
straightforward. Preservation of cylindrifications
is guaranteed by the condition that $F\notin G_{\Gamma,p}$ for all $\Gamma\subseteq \alpha$ and all $p\in A$. (Basically an elimination 
of cylindrifications, this condition is also used in \cite{DM}
to prove the main representation result for polyadic algebras.)
Moreover $f$ is an 
atomic representation since $F\notin G_{X,\tau}$ for every $\tau\in {}^{\alpha}\mathfrak{n}$, 
which means that for every $\tau\in {}^{\alpha}\mathfrak{n}$, 
there exists $x\in X$, such that
${\sf s}_{\bar{\tau}}^{\B}x\in F$, and so $\bigcup_{x\in X}f(x)={}^{\alpha}\mathfrak{n}.$ 
We conclude that $f$ is a complete  representation by Lemma \ref{r}.
\end{demo}

Let ${\sf CPA}_{\alpha}$ denote the class of polyadic algebras of dimension $\alpha$.
Contrary to cylindric algebras, we have:

\begin{corollary} The class ${\sf CPA}_{\alpha}$ is elementary, and it is axiomatizable by a finite schema in first order logic.
\end{corollary}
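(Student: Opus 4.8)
The plan is to identify the class ${\sf CPA}_\alpha$ of completely representable $\PA_\alpha$'s with an explicitly axiomatized elementary class. By Lemma \ref{r} and Theorem \ref{main}, an algebra $\A\in\PA_\alpha$ is completely representable if and only if it is atomic and completely additive; hence ${\sf CPA}_\alpha$ is exactly the class of atomic, completely additive $\PA_\alpha$'s. I would then show that each of the three conjuncts --- being a $\PA_\alpha$, being atomic, and being completely additive --- is cut out by first order axioms falling under finitely many schemata.

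The first two are immediate. The postulates (1)--(10) defining $\PA_\alpha$ are equations (two of them conditional on index constraints), so for each admissible choice of parameters $\Gamma,\Delta\subseteq\alpha$ and $\sigma,\tau\in{}^\alpha\alpha$ one obtains a first order sentence; the side conditions merely restrict which instances occur and are conditions on indices, not part of the formulas, so $\PA_\alpha$ is axiomatized by the finitely many schemata (1)--(10). Atomicity is the single first order sentence $\forall x\,(x\neq 0\to\exists y\,(\mathrm{At}(y)\wedge y\le x))$, where $\mathrm{At}(y)$ abbreviates $y\neq 0\wedge\forall z\,(z\le y\to(z=0\vee z=y))$ and $x\le y$ abbreviates the definable Boolean order $x\cdot y=x$.

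The crux is to render complete additivity first order, and here I would exploit the atomicity already imposed. For each $\tau\in{}^\alpha\alpha$ consider the sentence $\phi_\tau$:
\[
\forall a\,\forall b\,\Big[\,b\cdot{\sf s}_\tau a\neq 0\ \to\ \exists c\,\big(c\neq 0\wedge c\le a\wedge\forall d\,((d\neq 0\wedge d\le c)\to b\cdot{\sf s}_\tau d\neq 0)\big)\,\Big].
\]
I claim that over atomic Boolean algebras $\phi_\tau$ holds iff the Boolean endomorphism ${\sf s}_\tau$ is completely additive; granting this, the schema $\{\phi_\tau:\tau\in{}^\alpha\alpha\}$ (continuum many instances of one schema) together with the above yields a finite schema axiomatizing ${\sf CPA}_\alpha$. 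For the direction $\phi_\tau\Rightarrow$ complete additivity (which needs no atomicity), I would argue that if $\sum X$ exists and $u$ is an upper bound of ${\sf s}_\tau[X]$ with $u\not\ge{\sf s}_\tau(\sum X)$, then applying $\phi_\tau$ with $a=\sum X$ and $b={\sf s}_\tau(\sum X)\cdot -u$ produces a nonzero $c\le\sum X$; since $c\le\sum X$, some $d=c\cdot x$ with $x\in X$ is nonzero, and then ${\sf s}_\tau d\le{\sf s}_\tau x\le u$ gives $b\cdot{\sf s}_\tau d=0$, contradicting $\phi_\tau$; hence ${\sf s}_\tau(\sum X)=\sum{\sf s}_\tau[X]$. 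For the converse I would use that in any atomic Boolean algebra $\sum\{\,x:x\ \text{an atom},\,x\le a\,\}=a$ for every $a$: if $\phi_\tau$ failed at $(a,b)$, testing its inner condition at each atom $c=x\le a$ forces ${\sf s}_\tau x\le -b$ for every atom $x\le a$, so by complete additivity ${\sf s}_\tau a={\sf s}_\tau\big(\sum\{x:\text{atom}\le a\}\big)=\sum\{{\sf s}_\tau x:\text{atom}\ x\le a\}\le -b$, contradicting $b\cdot{\sf s}_\tau a\neq 0$.

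The main obstacle is precisely this last equivalence: complete additivity is a priori an infinitary condition and is genuinely \emph{not} first order over arbitrary Boolean algebras with operators. What rescues the argument --- and what must be checked with care --- is that once atomicity is assumed, every element is the join of the atoms below it and this join always exists, collapsing the relevant infinitary suprema onto a configuration that the first order $\phi_\tau$ can detect. With the equivalence in hand, elementarity and finite-schema axiomatizability of ${\sf CPA}_\alpha$ follow at once; the sharp contrast with the cylindric case is that there complete representability is not captured by any first order condition, so the analogous class fails to be elementary.
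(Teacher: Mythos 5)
Your proposal is correct, and it shares the paper's skeleton: by Lemma~\ref{r} and Theorem~\ref{main}, ${\sf CPA}_\alpha$ is identified with the class of atomic, completely additive $\PA_\alpha$'s; the polyadic postulates and atomicity are trivially first order; and the infinitary part is rendered by one formula per $\tau\in{}^\alpha\alpha$, a single schema. The genuine difference lies in which formula does the work and what it is proved to say. The paper's $\psi_\tau$ is a $\forall\exists$ statement amounting to $\sum\{{\sf s}_\tau x: x\in \At\A\}=1$, i.e.\ precisely the joins consumed in the proof of Theorem~\ref{main}; accordingly, the paper's verification of ${\bf Mod}(\Sigma)={\sf CPA}_\alpha$ implicitly re-enters that proof (only those particular joins are needed) rather than citing its statement. (Incidentally, $\psi_\tau$ as printed, with the conjunct ${\sf s}_\tau x\le y$, is a slip: that version fails in full set algebras for constant $\tau$, where every nonzero ${\sf s}_\tau x$ is a cylinder lying below no singleton; the workable formula, and the one the paper's own verification actually establishes, is ${\sf s}_\tau x\cdot y\neq 0$. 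Your $\phi_\tau$ is correct as written.) Your $\forall\forall\exists\forall$ sentence $\phi_\tau$ is instead proved equivalent, over atomic algebras, to complete additivity of ${\sf s}_\tau$ itself (with the forward direction needing no atomicity), so your axioms literally carve out ``atomic and completely additive'' and Theorem~\ref{main} can be invoked as a black box. This is more modular and logically tighter, at the price of a more complicated formula; the paper's choice buys simpler axioms plus the by-product that the weaker density-of-substituted-atoms condition already suffices for complete representability. One step you should make explicit: your schema constrains only the substitutions, whereas complete additivity as defined in the paper concerns all non-Boolean operations; this costs nothing, since each ${\sf c}_{(\Gamma)}$ is self-conjugate by postulate (4) (${\sf c}_{(\Gamma)}x\cdot y=0$ iff $x\cdot {\sf c}_{(\Gamma)}y=0$), hence automatically completely additive --- a convention the paper also adopts silently.
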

\begin{demo}{Proof} Atomicity can be expressed by a first order sentence, and complete additivity can 
be captured by the following continuum many formulas,
that form a single schema.
Let $\At(x)$ be the first order formula expressing that $x$ is an atom. That is $\At(x)$ is the formula
$x\neq 0\land (\forall y)(y\leq x\to y=0\lor y=x)$. For $\tau\in {}^{\alpha}\alpha$, let $\psi_{\tau}$ be the formula:
$$y\neq 0\to \exists x(\At(x)\land {\sf s}_{\tau}x\neq 0\land {\sf s}_{\tau}x\leq y).$$ 
Let $\Sigma$ be the set of first order formulas obtained  by adding all formulas $\psi_{\tau}$ $(\tau\in {}^{\alpha}\alpha)$
to the polyadic schema.
We show that ${\sf CPA}_{\alpha}={\bf Mod}(\Sigma)$. Let $\A\in {\sf CPA}_{\alpha}$. Then, by theorem \ref{r}, we have
$\sum_{x\in X} {\sf s}_{\tau}x=1$ for all $\tau\in {}^{\alpha}\alpha$. Let $\tau \in {}^{\alpha}\alpha$.
Let $a$ be non-zero, then $a\cdot \sum_{x \in X}{\sf s}_{\tau}x=a\neq 0$,
hence there exists $x\in X$, such that
$a\cdot {\sf s}_{\tau}x\neq 0$, and  so $\A\models \psi_{\tau}$.
Conversely, let $\A\models \Sigma$. Then for all $\tau\in {}^{\alpha}\alpha$, $\sum_{x\in X} {\sf s}_{\tau}x=1$. 
Indeed, assume that for some 
$\tau$, $\sum_{x\in X}{\sf s}_{\tau}x\neq 1$.
Let $a=1-\sum_{x\in X} {\sf s}_{\tau}x$. 
Then $a\neq 0$. But then there exists $x\in X$, such that ${\sf s}_{\tau}x\cdot 
\sum_{x\in X}{\sf s}_{\tau}x=0$
which is impossible. 
\end{demo}
We do not need all the $\psi_{\tau}$ for if $\tau$ is a bijection then ${\sf s}_{\tau}$ is self - conjugate,
hence completely additive.

The question that imposes itself now, is whether there are atomic polyadic algebras that are not completely additive. We do not know.
But here we give an example taken from \cite{AGNS}, and slightly modified, to show that if we 
restrict substitutions to only injective ones, together with all replacements and ones whose kernels induce finite equivalence classes, 
then the resulting variety is not completely additive.

However, the algebra that witnesses the non-complete additivity of the substitution operator 
corresponding to one replacement is {\it not} atomic.
Notice that here the resulting semigroup, generated by the available substitutions, 
has the same cardinality as $|\alpha|$, 
so that we are discarding quite a few substitutions, and this is a significant change.

But we believe that the non complete additivity of replacements
(even in this restricted context), does give an indication that there are (full) polyadic agebras that are not completely additive; but nevertheless
atomicity could well be a prohibiting factor to non-complete additivity of substitutions corresponding to non-bijective maps.

However, we also show here that {\it atomic} polyadic algebras of dimension $2$ may not be completely representable 
(The class of completely representable algebras, in this case, is also elementary. This can be proved exactly as \ref{r} above.
For higher finite dimensions the class of completely representable algebras is not even elementary \cite{Moh1}. This is a non-trivial result and the proof
in \cite{Moh1} uses a rainbow construction.)

\begin{example} 

\begin{enumarab}

\item Let $\B$ be an atomless Boolean algebra that has a Stone representation with unit $U$ such that for any distinct $u,v\in U$, 
there is $X\in B$ such that $u\in X$ and $v\in {}\sim X$.
Let $\alpha$ be an infinite ordinal. Let $R=\{\prod X_i : i\in \alpha, X_i\in \B\}\subseteq {}^{\alpha}U$.
and $A=\{\bigcup S: S\subseteq R: |S|<\omega\}.$
Then $A$ is the base of a subalgebra of $\wp(^{\alpha}U)$, call this algebra $\A$.
This  can be proved exactly as in \cite{AGNS} 
by noting that substitutions corresponding to injections just permute components, and that 
for $\Gamma\subseteq \alpha$, ${\sf C}_{(\Gamma)}R$ is the element in $A$ 
that agrees with $R$ off of $\Gamma$, that is it is the same as $R$ in all components, 
except for $i\in \Gamma$; here the $i$ th component is blown up to $U$.

Let $S=\{X\times \sim X\times U \times U\times\ldots : X\in B\}.$
Then, like in \cite{AGNS}, we have  $S_0^1(\sum S)= {}^{\alpha}U$
and $\sum \{S_{0}^1(Z): Z\in S\}=\emptyset.$

If $\tau\in {}^{\alpha}\alpha$ is not bijective, then we face a problem; the algebra $\A$ defined above might not be closed under such a $\tau$. 
Let $\P$ be the partition induced by  $ker(\tau)=\{(i,j)\in \alpha\times \alpha: \tau(i)=\tau(j)\}$,
and write $i\sim j$ if $\tau(i)=\tau(j)$.
Assume that $\P=\{P_n: n\in I\}$, where $|I|\leq |\alpha|$.
Let $i_n=min P_n$, and assume that for $n<m$, $i_n< i_m$. Let $J=I\sim \{i_n: n\in I\}$.
For $i\in I$, let $G_{i_n}=\bigcap_{i\in \alpha, i\sim i_n}X_i$, and $G_j=U$ if $j\notin J$.
Then ${\sf s}_{\tau}\prod X_i= \prod_{m\in I} G_{m}.$  
This last might not be in the algebra because it involves possibly infinite intersections.

And even if we take a {\it complete} 
atomless Boolean algebra, which exists, 
then we know that this algebra {\it cannot} be completely representable, for a complete representation entails that the algebra is 
atomic which is not the case. 
So though arbitrary meets exist in the algebra (by completeness) they are not necessarily 
reflected by infinite interestions in
the Stone representation. In other words, given $X_i: i\in I$, $I$ an infinite set, and $X_i\in \B$, there is nothing to guarantee that
$\bigcap_{i\in i} X_i$ is in $\B$. We find that this example is a near miss, 
and we conjecture that it can be appropriately modified to give 
a polyadic algebra of infinite dimension that is not 
completely additive. But this reasoning also tells us that we can count in those not necessarily injective maps 
whose kernels give finite equivalence classes, each such class renders only a finite intersection.

\item We show that atomic polyadic algebras of dimension $2$ 
may not be completely representable. For every infinite cardinal $\kappa$ we construct such an algebra with cardinality
$\kappa$. By our theorem \ref{r}, 
it suffices to show that one of the operations is not completely additive. The example is
also from \cite{AGNS}. We give the outline. Let $|U|=\mu$ be an infinite set and $|I|=\kappa$ be a cardinal such 
that $Q_n$, $n\in \kappa$,  is a family of relations that form a partition of $U\times U$, 
Let $i\in I$, and let $J=I\sim \{i\}$. Then of course $|I|=|J|$. Assume that $Q_i=D_{01}=\{s\in V: s_0=s_1\},$
and that each $Q_n$ is symmetric; that is for any  $S_{[0,1]}Q_n=Q_n$ and furthermore, that $dom Q_n=range Q_n=U$ for every $n\in \kappa$.
It is straightforward to show that such partitions exist.


Now fix $F$ a non-principal ultrafilter on $J$, that is $F\subseteq \mathcal{P}(J)$. 
For each $X\subseteq J$, define
\[
 R_X =
  \begin{cases}
   \bigcup \{Q_k: k\in X\} & \text { if }X\notin F, \\
   \bigcup \{Q_k: k\in X\cup \{i\}\}      &  \text { if } X\in F
  \end{cases}
\]

Let $$\A=\{R_X: X\subseteq I\sim \{i\}\}.$$
Notice that $|\A|\geq \kappa$. Also $\A$ is an atomic set algebra with unit $R_{J}$, and its atoms are $R_{\{k\}}=Q_k$ for $k\in J$.
(Since $F$ is non-principal, so $\{k\}\notin F$ for every $k$). This can be proved exactly like in \cite{AGNS}. 
The subalgebra $\B$ generated by the atoms is as required. 
We should also mention that this example shows that Pinters algebra, which are cylindrfier free algebras of polyadic algebras
for all dimensions may not be completely representable 
answering an implicit question of Hodkinson's \cite{h} top of p. 260. 
(In the absence of cylindrifiers the construction lifts to arbitrary dimensions, because we do not require that 
$dom Q_n =range Q_n=U$.) For infinite dimensions weak set algebras can be used. A weak set algebra 
is one whose unit is the set of sequences agreeing 
co-finitely with a given one.

\end{enumarab}

\end{example}\label{ex}

\begin{example} For cylindric algebras minimal dilations of atomic algebras may not be atomic. This is quite easy to show.
Let $\A\in \RCA_n$ such that $\A$ is atomic. Let $\B\in \CA_{\omega}$ such that $\A=\Nr_n\B$. Obviously such algebras exist.
Let $\B'=\Sg^{\B}A$, then $\B'$ is locally finite, and $\A=\Nr_n\B'$. 
Locally finite algebras are not atomic. Another even easier example is that if one takes a simple locally finite non-atomic algebra $\A$, then
$\Nr_0\A=\{0,1\}$. In fact, for cylindric algebras minimal dilations 
may not be unique, so that unlike polyadic algebras, we cannot speak about {\it the } minimal dilation 
of an even representable algebra. This property,  is strongly linked to the amalgamation property for the class of representable cylindric algebras, 
or rather, the lack of \cite{Sayedneat}. 
\end{example}

\begin{question} Are there atomic polyadic algebras that are not completely additive, hence not completey representable?
\end{question}

Dedekend or MacNeille completions for $\PA$, which we refer to as minimal completions,
are also problematic, and they have to be re-defined to adapt the possibility of non-complete additivity.

A definition of completions of not necessarily completely additive varieties of Boolean algebras with operators is given in \cite{AGNS}, 
but to our mind it is not  satisfactory, for it gives, for example, that the completion of the atomic algebra $\A$ 
in the second item of the previous example, it itself, for this algebra is complete.
And so the completon is {\it not} completely additive, the completion of $\A$ is not $\Cm\At\A$.

However, if $\A\in \PA_{\alpha}$ happens to be completely additive, then it has a minimal completion, 
because the equations axiomatizing $\PA_{\alpha}$ are Sahlqvist, and these are preserved under taking minimal completions of a completely additive algebra.. 
We do not know whether $\PA_{\alpha}$  is atom-canonical, nor even single-persistent. 
That is, if  $\A$ is atomic and {\it not} completely additive, is the complex algebra of its atom structure a polyadic algebra, in symbols, 
$\Cm\At\A\in \PA_{\alpha}$?  Is the algebra generated by the singletions of $\A\A$ a $\PA_{\alpha}$?, not that an affirmative answer to the first question implies an affirmative answer to the second,
but the converse is not true.

Note that in this case, since $\Cm\At\A$ is completely additive (complex algebras are completely additive) 
and $\A$ is not, so that $\A$  does not embed into $\Cm\At\A$ via $a\mapsto \{x\in \At\A: x\leq a\}$.
If $\A$ is completely additive and atomic, then the complex algebra of its atom structure, namely, $\Cm\At\A$  is just its ordinary 
completion, with the above embedding.

\begin{question} Is $\PA_{\alpha}$ atom canonical?
\end{question}

On the other hand, $\PA_{\alpha}$ is a canonical variety because again it is axiomatized by Sahlqvist equations (in fact, positive ones). 
Furthermore, if $\A\in \PA_{\alpha},$ then its canonical extension is $\Nr_{\alpha}\B^+$ where $\B^+$ is the complex algebra of the minimal dilation 
$\B$ of $\A$ in $\geq \omega$ many dimensions ($\B^+$ is unique, it does not depend on the number of extra dimensions). 

\section{ A metalogical interpretation in Keisler's logic}

Polyadic algebras of infinite dimension correspond 
to a certain infinitary logic studied by Keisler, and referred to in the literature as Keisler's logic. Keisler's 
logic allows formulas of infinite length and quantification on infinitely many 
variables, but is does not allow infinite conjunctions, with semantics defined as expected.
While Keisler \cite{K}, and independently Monk and Daigneault \cite{DM}, 
proved a completeness theorem for such logics, our result implies a 
`Vaught's theorem' for such logics, namely, that certain atomic theories, namely those whose Tarski Lindenbaum algebra is completely additive,
have atomic models, in a sense to be made precise.

Let $\L$ denote Keislers's logic with $\alpha$ many variables 
($\alpha$ an infinite ordinal). For a structure $\M$, a formula $\phi$,  and an assignment $s\in {}^{\alpha}M$, we write
$\M\models \phi[s]$ if $s$ satisfies $\phi$ in $\M$. We write $\phi^{\M}$  for the set of all assignments satisfying $\phi.$

\begin{definition} Let $T$ be a given $\L$ theory. 
\begin{enumarab}
\item A formula $\phi$ is said to be complete in $T$ iff for every formula $\psi$ exactly one of 
$$T\models \phi\to \psi, \\ T\models \phi\to \neg \psi$$
holds.
\item A formula $\theta$ is completable in $T$ iff there there is a complete formula $\phi$ with $T\models \phi\to \theta$.
\item $T$ is atomic iff if every formula consistent with $T$ is completable in $T.$

\item A model $\M$ of $T$ is atomic if for every $s\in {}^{\alpha}M$, there is a complete formula $\phi$ such that $\M\models \phi[s].$
\end{enumarab}
\end{definition}
We denote the set of formulas in a given language by $\Fm$ and for a set of formula $\Sigma$ we write $\Fm_{\Sigma}$ for the Tarski-
Lindenbaum quotient (polyadic)
algebra. 

\begin{theorem} Let $T$ be an atomic theory in $\L$ and assume that $\phi$ is consistent with $T$. Assume futher that 
$\Fm_T$ is completely additive. Then $T$ has an atomic model in which $\phi$ is satisfiable.
\end{theorem}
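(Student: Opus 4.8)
The plan is to transport the statement to the Tarski--Lindenbaum algebra and then invoke Theorem~\ref{main}. Set $\A=\Fm_T$, the quotient polyadic algebra of $T$; by the Daigneault--Monk correspondence this is a $\PA_{\alpha}$. The first step, and the conceptual heart of the argument, is to install the dictionary between the metalogical notions of the preceding definition and the algebraic notions of Section~1. A formula $\psi$ is complete in $T$ exactly when its class $[\psi]$ is an atom of $\A$: the requirement that for every $\chi$ precisely one of $T\models\psi\to\chi$ and $T\models\psi\to\neg\chi$ hold says exactly that $[\psi]\neq 0$ and, for every $[\chi]$, either $[\psi]\leq[\chi]$ or $[\psi]\cdot[\chi]=0$, which is atomicity of $[\psi]$ (the case $[\psi]=0$ is ruled out, since then both implications would hold). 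Hence a formula is completable in $T$ iff there is an atom below its class, $T$ is an atomic theory iff $\A$ is an atomic algebra, and $\phi$ is consistent with $T$ iff $[\phi]\neq 0$.

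Under this translation the hypotheses become: $\A=\Fm_T$ is atomic, $\A$ is completely additive, and $c:=[\phi]$ is a non-zero element of $\A$. These are precisely the hypotheses of Theorem~\ref{main} with distinguished non-zero element $c$, so I would apply that theorem to obtain a complete representation $f:\A\to\wp(^{\alpha}U)$ satisfying $f(c)\neq\emptyset$.

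Next I would build an honest $\L$-structure $\M$ from $f$ by the standard ``a representation yields a model'' construction used in the completeness proof of \cite{DM}: take $U$ as the domain and interpret the relation symbols so that the satisfaction sets of the atomic formulas are the values prescribed by $f$. Because $f$ is a polyadic homomorphism it commutes with the boolean connectives, with the substitutions ${\sf s}_{\tau}$ (renaming of variables) and with the cylindrifiers ${\sf c}_{(\Gamma)}$ (quantification); an induction on formulas then yields $\psi^{\M}=f([\psi])$ for every formula $\psi$. In particular $f(1)={}^{\alpha}U$ gives $\M\models T$, and $f([\phi])=f(c)\neq\emptyset$ gives $\M\models\phi[s]$ for some assignment $s$, so $\phi$ is satisfiable in $\M$.

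Finally, that $\M$ is atomic is read off from Lemma~\ref{r}: since $f$ is a complete representation it is atomic, so for every assignment $s\in{}^{\alpha}U$ the ultrafilter $f^{-1}(s)=\{a\in A: s\in f(a)\}$ is principal, hence generated by an atom $x$ of $\A$. Writing $x=[\psi]$ with $\psi$ complete, $s\in f(x)=\psi^{\M}$ yields $\M\models\psi[s]$; thus every assignment satisfies a complete formula and $\M$ is an atomic model of $T$. I expect the only real obstacle to be the middle step, namely the faithful conversion of the abstract set-algebra representation into a genuine $\L$-structure together with the verification that $\psi^{\M}=f([\psi])$; once this Daigneault--Monk bridge is secured, atomicity of the model follows immediately from the atomic $=$ complete equivalence of Lemma~\ref{r}, and satisfiability of $\phi$ from $f(c)\neq\emptyset$.
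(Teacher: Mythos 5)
Your proposal is correct and follows essentially the same route as the paper: pass to $\A=\Fm_T$, observe that atomicity of $T$ and consistency of $\phi$ translate to atomicity of $\A$ and $[\phi]\neq 0$, apply Theorem~\ref{main} to get a complete representation $f$ with $f([\phi])\neq\emptyset$, read off a model with $\psi^{\M}=f([\psi])$ by induction, and use completeness of the representation (via Lemma~\ref{r}) to see that every assignment satisfies a complete formula. Your explicit verification of the dictionary (complete formula $=$ atom) and of atomicity via principal ultrafilters $f^{-1}(s)$ merely spells out what the paper leaves as ``it readily follows.''
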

\begin{demo}{Proof}  Assume that $T$ and $\phi$ are given. Form the Lindenbaum Tarski algebra $\A=\Fm_T$ and let $a=\phi/T$.
Then $\A$ is an atomic completely aditive polyadic algebra, since $T$ is atomic, and $a$ is non-zero, because $\phi$ is consistent with $T$.
Let $\B$ be a set algebra with base $M$, and $f:\A\to \B$ be a complete representation
such that $f(a)\neq 0$.
We extract a model $\M$ of $T$, with base $M$, from $\B$ as follows. 
For  a relation symbol $R$ and $s\in {}^{\alpha}M$, $s$ satisfies $R$ if $s\in h(R(x_0,x_1\ldots ..)/T)$. Here the variables occur in their 
natural order.
Then one can prove by a straightforward induction that $\phi^{\M}=h(\phi/T)$. Clearly $\phi$ is satisfiable in $\M$. 
Moreover, since the representation 
is complete it readily follows that  $\bigcup\{\phi^{\M}: \phi\text { is complete }\}={}^{\alpha}M$, and we are done.
\end{demo}

If we add infinite conjunctions to our logic and stipulate that for any theory 
$T\vdash {\sf s}_{\tau}\bigwedge \phi_T\equiv \bigwedge {\sf s}_{\tau}\phi_T$, then we get a proper extension of Keislers logic 
such that Tarski Lindenbaum algebras are completely 
additive, and so in this case atomic theories with no extra conditions will have atomic models.
One way to do this is to stipulate the axiom $\vdash {\sf s}_{\tau}\bigwedge \phi\equiv \bigwedge {\sf s}_{\tau}\phi.$
Then for such expansion of Keiler's logic with infinite conjunction, every atomic theory with no further conditions has an atomic model.

Results in algebraic logic are more interesting when they have immediate impact on the logic side be it first order logic or extensions thereof.
For ordinary first order logic atomic theories  in countable languages have atomic models, 
as indeed Vaught proved, but in the first order context countability is essentially 
needed.  Furthermore, in the context of first order logic, atomic countable models for atomic theories are unique (up to isomorphism).

Our theorem can also be regarded as an omitting types theorem, for possibly uncountable languages, for the representation constructed in our theorem
omits the set (or infinitary type) of co-atoms in the sense that the representation $f$, defined in our main theorem, satisfies 
$\bigcap_{x\in X^{-}}f(x)=\emptyset$.  

A standard omitting types theorem for Keisler's logic, addressing the omission of 
a family of types, not just one, is highly 
problematic since, even in the countable case, i.e when the base of the algebra considered  is countable, since we have uncountably many 
operations.  Nevertheless, a  natural omitting types theorem can be formulated as follows. 

Let $\L$ denote Keisler's logic, and let $T$ be an $\L$ theory. A set $\Gamma\subseteq \Fm$ is principal, 
if there exist a formula $\phi$ consistent with $T$,
such that $T\models \phi\to \psi$ for all $\psi\in \Gamma$. Otherwise $\Gamma$ is non-principal. A model $\M$ of $T$ omits $\Gamma$, if
$\bigcap_{\phi\in \Gamma}\phi^{\M}=\emptyset$, where $\phi^{\M}$ is the set of assignments satisfying $\phi$ in $\M$.  
Then the omitting types theorem in this context says:
If $\Gamma$ is non-principal, then there is a model $\M$ of $T$ that omits $\Gamma$.
Algebraically:

\begin{athm}{OTT} Let $\A\in \PA_{\alpha}$ be completely additive and $a\in A$ be non-zero. Assume that $X\subseteq A$, is such that $\prod X=0$.
Then there exists a set algebra $\B$ 
and a homomorphism $f:\A\to \B$ such that $f(a)\neq 0$ and $\bigcap_{x\in X}f(x)=\emptyset.$
\end{athm}

Unlike omitting types theorems for countable languages, the proof cannot resort to the Baire Category theorem, 
for the simple reason that the Baire category theoem applies only to the countable case.
Nevertheless, our proof  of theorem \ref{main}, shows how to omit  the non principal type consisting soley of co-atoms, 
basically because the set principal ultrafilters is dense in the Stone topology,  and a principal ultrafilter lie outside nowhere dense sets. 
It is not at all clear how to omit arbitrary non-principal types, when the algebra in question is not atomic. 
If we take only the set of infinitary joins corresponding to quantifier  elimination, then an ultrafilter 
preserving them can be found giving an {\it ordinary} representation \cite{DM}, but there is no topology involved here, at least in the proof of Diagneault and
Monk; the ultrafilter is built in a step by step fashion. 
In case our algebra is completely additive, then we would have a second infinitary meet that we want to omit, 
and the corresponding set
(which is now an infinite intersection) in the Stone space is also nowhere dense.

But in this case, when this meet is not the set of co-atoms, example when the algebra is atomless,
we do not guarantee that such a set consisting of the ultrafilters (models) {\it not } omitting 
the non principal  type do not exhaust the set consisting of those ultrafilters preserving only cylindrifier elimination, these are the ultrafilters from which we 
obtain representations.
(This cannot happen in the countable case where the Baire Category theorems entails that the complement of such a set or even a 
complement of $< covK$ union of such sets 
is dense. Here $covK$ is the least cardinal such that the Baire Category theorem 
for compact Hausdorff spaces fail and also the largest cardinal for which Martin's axiom on countable Boolean algebra holds, 
and it is the best estimate for number of non isolated types 
omitted.) 

But in this form of generality the omitting types theorem fails as the next easy example show:

\begin{example} Let $T$ be a an uncountable complete first order theory {\it without} equality, in an uncountable languages 
having a sequence of variables of order type
$\omega$. Assume that  there exists a non principal type $\Gamma$ of this theory that cannot be omitted.
Easy examples are known. Then take the {\it locally finite polyadic Tarski Lindenbaum algebra} $\A\in \PA_{\omega}$ 
based on this theory, and let $X=\{\phi_T: \phi\in \Gamma\}$. Then $\prod X=0$ and there is no (locally finite) 
polyadic set algebra omitting this meet.
\end{example}

It is worthy of note that locally finite algebras, except in trivial cases, are {\it atomless}.

Nevertheless, there is yet another interesting  connection between complete representations and omitting types. 
More can be said here. A classical theorem of Vaught for first order logic says that countable atomic theories have countable atomic models,
such models are necessarily prime, and a prime model omits all non principal types.
We have an analogous situation here, and the proof is very simple, assuming simplicity (in the universal algebraic sense) of our algebra,
that is, assuming that the corresponding theory in Keisler's logic is complete. The general case is not much harder, we just work with disjoint unions
of square units.

\begin{theorem} Let $f:\A\to \wp(^{\alpha}U)$ be a complete representation of $\A\in \PA_{\alpha}$.
Then for any set $I$, for any given family $(Y_i:i\in I)$ of subsets of $\A$,  if $\prod Y_i=0$ for all $i\in I$, then we have
$\bigcap_{y\in Y_i} f(y)=\emptyset$ for all $i\in I$.
\end{theorem}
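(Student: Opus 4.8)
The plan is to reduce everything to a pointwise application of the defining property of a complete representation, handling one index $i\in I$ at a time; the quantification over the full family $(Y_i:i\in I)$ adds nothing, since the argument for a fixed $i$ never refers to the other members.

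So I would first fix $i\in I$. By hypothesis $\prod Y_i=0$, so in particular this infimum exists in $\A$. Hence the defining equation of a complete representation, namely $f(\prod X)=\bigcap f[X]$ whenever $\prod X$ is defined, applies with $X=Y_i$ and yields $f(\prod Y_i)=\bigcap_{y\in Y_i}f(y)$.

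Next I would evaluate the left-hand side. Since a representation is in particular a Boolean homomorphism into $\wp(^{\alpha}U)$, it sends the bottom element $0$ of $\A$ to the bottom of the set algebra, that is $f(0)=\emptyset$. Combining this with $\prod Y_i=0$ gives $\bigcap_{y\in Y_i}f(y)=f(0)=\emptyset$, and since $i$ was arbitrary the conclusion holds for every $i\in I$.

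I do not expect any genuine obstacle: the statement is an immediate corollary of the definition of complete representation together with preservation of $0$ by homomorphisms. The only point that deserves a remark is the degenerate case $Y_i=\emptyset$, where the empty meet is the top element, so that $\prod Y_i=0$ would force $0=1$ and the algebra, hence the representation, is trivial; this case can simply be set aside. In the logical reading, the theorem states that the model extracted from a complete representation simultaneously omits every type $Y_i$ whose meet is $0$, which is the algebraic counterpart of the classical fact that a prime model omits all non-principal types.
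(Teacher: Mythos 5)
Your proof is correct, but it follows a genuinely different route from the paper's. You apply the defining equation of a complete representation, $f(\prod X)=\bigcap f[X]$ whenever $\prod X$ is defined, directly with $X=Y_i$, and finish with $f(0)=\emptyset$; since the hypothesis $\prod Y_i=0$ guarantees that the meet is defined, this is watertight and is the shortest possible argument. The paper instead dualizes: it sets $Z_i=\{-y:y\in Y_i\}$, so that $\sum Z_i=1$, uses the fact that a completely representable algebra is atomic (Lemma \ref{r}) to show that every atom lies below some $z\in Z_i$, and then invokes only the \emph{atomicity} of the representation, namely ${}^{\alpha}U=\bigcup_{x\in \At\A}f(x)$, to conclude that $\bigcup_{z\in Z_i}f(z)={}^{\alpha}U$ and hence $\bigcap_{y\in Y_i}f(y)=\emptyset$. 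The difference matters in what each argument presupposes: the paper's proof never uses the full meet-preservation property of $f$, only that the images of the atoms cover the unit, so it in effect shows that any \emph{atomic} representation omits every zero meet --- which is the substantive half of the equivalence between atomic and complete representations that the paper established earlier; your proof, by contrast, takes the full strength of the definition as given and gets the theorem as an immediate corollary. Your side remark about the degenerate case $Y_i=\emptyset$ (where the empty meet is the top element, forcing $0=1$) is sound, and note that the paper's proof silently shares this edge case, since there $Z_i=\emptyset$ gives $\sum Z_i=0\neq 1$ in a nondegenerate algebra.
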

\begin{proof} Let $i\in I$. Let $Z_i=\{-y: y\in Y_i\}$. Then $\sum Z_i=1$. $\A$ is completely representable, hence
$\A$ is atomic, and so for any atom $x$, we have $x.\sum Z_i=x\neq 0$.
Hence there exists $z\in Z_i$, such that
$x.z\neq 0$. But $x$ is an atom, hence $x.z=x$ and so $x\leq z$. We have shown that for every atom $x$, there exists $z\in Z_i$ such that $x\leq z$.
It follows immediately, since a complete representation is an atomic one,  that
$^{\alpha}U=\bigcup_{x\in \At\A}f(x)\leq \bigcup_{z\in Z_i} f(z)$, 
and so, $\bigcap_{y\in Y_i} f(y)=\emptyset,$
and we are done.
\end{proof}

Every principal ultrafilter in a completely additive polyadic algebra
gives rise to a complete representation (an atomic model). Are these representations, like first order logic, isomorphic, 
that is, are the generalized
set algebras (obtained by taking the disjoint union of the bases over non-zero elements of the algebra) base isomorphic? 

This is not case because the base of our algebras can be any of any cardinality $\geq$ $\mathfrak{n}$ with $\mathfrak{n}$
as in the proof of theorem \ref{main}, so that set algebras constructed {\it cannot} 
be base isomorphic.   It is true that the dilation is unique in a fixed dimension, but if we take a larger dimension, 
we get another also unique dilation but in this 
larger dimension. Evidently the two dilations cannot be isomorphic for the very simple reason that they have different similarity types.
This is a significant deviation fom first order logic.

Let us formulate this last paragraph in a theorem. Let $\A\in \PA_{\alpha}$ be  simple,  
infinite and hereditory atomic (every subalgebra is atomic). assume that $|A|=|\alpha|.$
Then the the number of principal ultrafilters $\leq|\alpha|$, 
but if it not hereditory atomic then this number  is
$\leq {}^{|\alpha|}2$. However, regardless of the number of ultrafilters in the Stone space, we have: 

\begin{theorem} Let $T$ be a complete atomic theory in Keisler's logic such that $\A=\Fm_T$ is completely additive. 
Let $\mathfrak{m}$ be the local degree of $\A$, $\mathfrak{c}$ its effective cardinality and 
$\mathfrak{n}$ be any cardinal such that $\mathfrak{n}\geq \mathfrak{c}$ 
and $\sum_{s<\mathfrak{m}}\mathfrak{n}^s=\mathfrak{n}.$ Then $T$ has an atomic model of size $\mathfrak{n}.$
\end{theorem}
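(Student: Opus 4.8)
The plan is to read the model off directly from the complete representation produced in Theorem~\ref{main}, while keeping careful track of its base. First I would translate the logical hypotheses into algebraic ones. Since $T$ is atomic, $\A=\Fm_T$ is atomic; since $T$ is complete, $\A$ is simple in the universal-algebraic sense (as already noted above, completeness of the Keisler theory is exactly simplicity of its Tarski--Lindenbaum algebra); and complete additivity is assumed outright. Hence $\A$ is an atomic, simple, completely additive member of $\PA_\alpha$, and in particular $\mathfrak{m}$, $\mathfrak{c}$ and $\mathfrak{n}$ are well defined.

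Next I would invoke Theorem~\ref{main}, paying attention to the base of the representation it constructs. In that proof the base is taken to be a cardinal subject to precisely the requirements $\mathfrak{n}\geq\mathfrak{c}$ and $\sum_{s<\mathfrak{m}}\mathfrak{n}^s=\mathfrak{n}$, which are exactly the quantities named in the present statement; the resulting homomorphism $f$ has unit $^\alpha\mathfrak{n}$ and satisfies $f(c)\neq 0$ for a chosen nonzero $c\in A$. Because $\A$ is simple, $f$ is automatically injective: its kernel is a congruence that is not all of $\A\times\A$ (as $f(c)\neq 0$), hence trivial. Thus $f:\A\to\wp(^\alpha\mathfrak{n})$ is a complete representation into a single full set algebra with base of cardinality $\mathfrak{n}$--- no subdirect product or disjoint union of square units is needed.

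Finally I would extract the model exactly as in the Vaught-style theorem proved above. Put $M=\mathfrak{n}$ and interpret each relation symbol $R$ by declaring that $s\in{}^\alpha M$ satisfies $R$ iff $s\in f(R(x_0,x_1,\dots)/T)$, the variables listed in their natural order. A routine induction on formulas gives $\phi^{\M}=f(\phi/T)$ for every formula $\phi$, so $\M\models T$. Since $f$ is a complete representation it is atomic by Lemma~\ref{r}, so the principal ultrafilters exhaust the base and every assignment satisfies some complete formula, i.e. $\bigcup\{\phi^{\M}:\phi\text{ complete}\}={}^\alpha M$. Thus $\M$ is an atomic model of $T$, and since its base is $\mathfrak{n}$ we have $|M|=\mathfrak{n}$, a model of the required size.

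The one delicate point, and the step I would treat most carefully, is this bookkeeping on the base: everything hinges on the cardinal $\mathfrak{n}$ in the present statement being the very cardinal used to index the base in the proof of Theorem~\ref{main}, so one must check that that construction never enlarges the base beyond $\mathfrak{n}$ and that simplicity of $\A$ genuinely collapses the target to the single square unit $^\alpha\mathfrak{n}$. Granting this, the size claim $|M|=\mathfrak{n}$ is immediate and the rest is the same induction as before.
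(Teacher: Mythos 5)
Your proposal is correct and takes essentially the same route as the paper: the paper's own proof is just a pointer to the proof of Theorem~\ref{main} together with the remark that simplicity of $\Fm_T$ (equivalently, completeness of $T$) makes the representing function injective, hence an isomorphism onto a set algebra with base $\mathfrak{n}$. Your write-up merely fills in the details the paper leaves implicit --- the atomic/simple/completely additive translation, the bookkeeping that the base in Theorem~\ref{main} is exactly the cardinal $\mathfrak{n}$ of the statement, and the model extraction and atomicity argument already used in the earlier Vaught-style theorem.
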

\begin{proof} See the proof of theorem  \ref{main}. Here the representing function is injective, due to simplicity of $\Fm_T$ inducing an isomorphism.
\end{proof}

So here each principal ultrafilter gives rise to infinitely many atomic representations.
and we are infront of an Upward Skolem Theorem addressing atomic models, 
for every atomic model there is one with larger cardinality.  
A natural question here is that if $\mathfrak{n}< \mathfrak{m}$, is there perhaps a {\it subbase isomorphism} between 
the representation corresponding to adding $\mathfrak{n}$ witnesses,  into that corresponding to adding $\mathfrak{m}$ witnesses,
which is an algebraic reflection of an  elementary embedding?

We should mention that  an analogous result holds for several reducts of polyadic algebras (without equality), namely,
complete additivity and atomicity is equivalent to complete representability. 
For example if we take the reduct by restricting 
cylindrifiers on only those subsets $\Gamma$ of $\alpha$ such that 
$|\Gamma|< \kappa\leq |\alpha|$, where $\kappa$ is an infinite cardinal, then we get the same result. 

The point is, as long as we have {\it all substitutions}, then we have rich transformation systems,
hence functional dilations, from which we can get dilations of the algebra in question by discarding cylindrifiers, forming the functional dilation using
substitutions, then re-defining cylindrifiers (to agree with the old ones in the neat reduct) as we did in our proof of \ref{main}. 
Furthermore, in all cases if the original algebra is atomic, then so will be the dilation 
(because it is basically a product of the atomic boolean algebra), 
hence the proof survives verbatim.

\subsection{Modified Keisler's logic with equality}

The case of polyadic agebras of infinite dimension {\it with equality} is much more involved. 
In this case the class of representable algebras is not a variety; it is not closed under ultraproducts, 
although every algebra has the neat embedding property (can be embedded into the neat reduct of algebras in every higher 
dimension).  In particular, we do not guarantee that atomic algebras are even representable, let alone admit a complete representation. 
Still we can ask whether atomic {\it representable} algebras are completey representable. 
The question seems to be a hard one, because we cannot resort to a neat embedding theorem as we did here for the equality free case.

In fact, finding neat embedding theorems for polyadic equality algebras, that enforce  even relativized representations is a 
very active topic, that is gaining a lot of momentum \cite{1}.
One can find well motivated appropriate notions of relativized semantics by first locating them while giving up classical semantical prejudices. 
It is hard to give a precise mathematical underpinning to such intuitions. What really counts at the end of the day
is a completeness theorem stating a natural fit between chosen intuitive concrete-enough, but perhaps not {\it excessively} concrete, 
semantics and well behaved axiomatizations. 
The move of altering semantics has radical phiosophical repercussions, taking us away from the conventional 
Tarskian semantics captured by Fregean-Godel-like axiomatization; the latter completeness proof is effective 
but highly undecidable; in modal logic and finite variable fragments of first order logic, which have a modal formalism, this is 
highly undesirable.

Now we show that such algebras, when atomic,
admit complete {\it relativized} representations.

Ferenczi has a lot of work in this direction \cite{f}. In the latter article, 
he defines an abstract equational class $\CPEA_{\alpha}$, $\alpha$ an infinite ordinal,  which is like 
cylindric algebras in that it has only finite cylindrifiers and like polyadic algebras, in that its cylindrifier free reduct,
forms a transformation system; all substitutions are available. Also, and this is the most important part this class has diagonal elements.
The presence of diagonal elements makes the variety completely additive, which is an acet, in our context seeking complete representations.
However,  full fledged commutativity of cylindrifiers do not hold here. 

Ferenczi proves a {\it strong completeness} theorem in analogy the Diagneault Monk representation theorem for polyadic algebras, namely, 
$\CPEA_{\alpha}=\Gp_{\alpha}$, where $\Gp_{\alpha}$ is a class of set algebras whose units are relativized. (It is a union of weak set algebras 
that are not necessarily disjoint). 
The technique is a Henkin construction implemented via a neat 
embedding theorem. This is definitely an achievement, 
because for classical polyadic {\it equality} algebras, when cylindrifiers commute,  
neat embeddability into infinitely many extra dimensions
{\it does not} enforce representability. 

But the choice of representing ultrafilters (which Ferenczi calls perfect) 
here is more delicate, and the representation
is somewhat more intricate than the classical case. 
This is due to the fact that cylindrifiers and substitutions do not commute, in cases where it is consistent that they do as witnessed by classical 
representations. Sacrifizing commutativity of cylindrfiers and for that matter commutativity of cylindrifiers and substitutions
make relativized representability possible. Also the representant class of relativized algebras turns out to be a variety; 
this is not the case with classical representations for polyadic equality algebras having square Tarskian semantics, even if we restrict the similarity type to only finite
cylindrifiers. In the latter case the presence of diagonal elements together with infinitary substitutions
make this class not closed under ultraproducts.

But such modifications will survive our proof.
And indeed we can show also using a neat embedding theorem together with our 
previous topological argument, baring in mind that we can omit the condition of complete additivity since it holds anyway, we have:
(However, we give the general idea and some of the details will be omitted, but can be recovered easily from \cite{f}.)
\begin{theorem} Every atomic $\CPEA_{\alpha}$ is completely representable
\end{theorem}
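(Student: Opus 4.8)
The plan is to replay the proof of Theorem \ref{main} almost verbatim, substituting Ferenczi's neat embedding and representation theorem $\CPEA_{\alpha}=\Gp_{\alpha}$ \cite{f} for the Daigneault--Monk machinery \cite{DM}, and exploiting the fact that complete additivity is now automatic. Since $\CPEA_{\alpha}$ carries diagonal elements, all of its non-Boolean operations are completely additive, so the extra hypothesis we had to impose by hand in Theorem \ref{main} comes for free. Thus I start with an atomic $\A\in\CPEA_{\alpha}$ and a non-zero $c\in A$, aiming to produce a relativized complete representation $f$ with $f(c)\neq 0$.

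First I would form a dilation $\B$ of $\A$ in $\mathfrak{n}$ many dimensions, with $\mathfrak{n}$ chosen exactly as in Theorem \ref{main} (so that $\mathfrak{n}\geq\mathfrak{c}$ and $\sum_{s<\mathfrak{m}}\mathfrak{n}^{s}=\mathfrak{n}$), following Ferenczi's construction: discard the cylindrifiers of $\A$, pass to a minimal functional dilation using the full transformation system of substitutions, and reintroduce the (only finite) cylindrifiers in the larger algebra so that they agree on $\Nr_{\alpha}\B$ with those of $\A$. The delicate point here, as opposed to the polyadic case, is that cylindrifiers need not commute, so the soundness of this redefinition and the identity $\A\cong\Nr_{\alpha}\B$ rest on the analysis in \cite{f} rather than on \cite{DM}. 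As in the remark following Theorem \ref{main}, the dilation is essentially a product of copies of the atomic Boolean reduct of $\A$, so the same device --- choosing an atom below a non-zero coordinate and zeroing out the rest --- shows that $\B$ is atomic.

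Next I would record the infinitary joins the representation must respect. For each \emph{finite} $\Gamma\subseteq\alpha$ and each $p$, quantifier elimination gives ${\sf c}_{(\Gamma)}p=\sum\{{\sf s}_{\bar{\tau}}p:\tau\in{}^{\alpha}\mathfrak{n},\ \tau\upharpoonright(\alpha\sim\Gamma)=Id\}$ in $\B$, and since $\A=\Nr_{\alpha}\B$ is atomic with atom set $X$, complete additivity yields $\sum{\sf s}_{\bar{\tau}}^{\B}X=1$ for every $\tau\in{}^{\alpha}\mathfrak{n}$. In the Stone space $S$ of $\B$ these produce, exactly as in \eqref{tarek3} and \eqref{tarek4}, nowhere dense sets $G_{\Gamma,p}$ and $G_{X,\tau}$. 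One extra ingredient is needed to land inside the relativized class $\Gp_{\alpha}$ rather than in a square algebra: I would adjoin the further joins built into Ferenczi's notion of a \emph{perfect} ultrafilter --- the conditions governing the diagonal elements and the interaction of the non-commuting cylindrifiers with substitutions --- and check that each of them is likewise realized as a nowhere dense subset of $S$.

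The crucial observation, and the reason the argument survives verbatim, is that a single principal ultrafilter dodges all of these sets at once: take the atom below $c$, let $F$ be the principal ultrafilter it generates, and recall that an isolated point of $S$ lies outside every nowhere dense set. Hence $F$ avoids $G_{\Gamma,p}$, $G_{X,\tau}$, and all the perfect-ultrafilter sets simultaneously, however many there are --- no Baire category argument is required. Defining $f(a)=\{\tau\in{}^{\alpha}\mathfrak{n}:{\sf s}_{\bar{\tau}}^{\B}a\in F\}$ then yields a homomorphism into a relativized set algebra in $\Gp_{\alpha}$ with $f(c)\neq 0$ (since $Id\in f(c)$); preservation of the Booleans, substitutions and diagonals is routine, preservation of the finite cylindrifiers follows from $F\notin G_{\Gamma,p}$, and $F\notin G_{X,\tau}$ makes $f$ atomic, whence complete by Lemma \ref{r}. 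The main obstacle I anticipate is precisely this relativized, non-commutative step: verifying that Ferenczi's perfectness conditions can all be packaged as nowhere dense sets and that a principal ultrafilter avoiding them genuinely delivers a member of $\Gp_{\alpha}$ with correctly represented diagonals. Once that bookkeeping is checked against \cite{f}, the topology does the rest.
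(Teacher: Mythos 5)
Your skeleton is the right one, and it is the paper's: Ferenczi's neat embedding into an atomic dilation, a principal ultrafilter avoiding finitely many families of nowhere dense sets, then Lemma \ref{r}. But at the two points where $\CPEA_{\alpha}$ genuinely differs from $\PA_{\alpha}$ your proposal falls back on the polyadic mechanism, and there it breaks. The first is cylindrifier elimination. You assert that in the dilation $\B$ one has, for finite $\Gamma$, the join ${\sf c}_{(\Gamma)}p=\sum\{{\sf s}_{\bar{\tau}}p:\tau\in{}^{\alpha}\mathfrak{n},\ \tau\upharpoonright(\alpha\sim\Gamma)=Id\}$. That identity is the Daigneault--Monk quantifier elimination, and its proof uses precisely the axioms governing commutation of substitutions with cylindrifiers that $\CPEA_{\alpha}$ \emph{weakens}; you even flag non-commutativity as the delicate point, yet still invoke the equation. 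The paper does not use it. Instead it uses the diagonal-based elimination: for admissible $\sigma$ (meaning $Do\,\sigma\subseteq\alpha$ and $Rg\,\sigma\cap\alpha=\emptyset$), ${\sf s}_{\sigma}{\sf c}_{i}p=\sum_{j}{\sf s}_{\sigma}{\sf s}_i^jp$, which rests on the cylindric-style fact ${\sf c}_{k}x=\sum_{i}{\sf s}_k^i x$ with ${\sf s}_i^jx={\sf c}_i({\sf d}_{ij}\cdot x)$; its proof \emph{depends on the diagonal elements}. So the nowhere dense sets to be dodged are indexed by admissible substitutions and single indices, not by your $G_{\Gamma,p}$.

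The second gap is what you dismiss as bookkeeping: perfectness of the ultrafilter and the diagonals. The paper does not obtain a perfect ultrafilter by adjoining further joins encoded as nowhere dense sets; it passes to the minimal completion $\B'$ of $\B$ (which exists because $\CPEA$ is completely additive), takes the filter $G$ of $\B'$ generated by the atom generating $F$, and replaces $F$ by $G\cap\B$ --- that intersection is the perfect ultrafilter. Moreover the representation cannot be your $f(a)=\{\tau\in{}^{\alpha}\mathfrak{n}:{\sf s}_{\bar{\tau}}^{\B}a\in F\}$ on sequences into $\mathfrak{n}$: with diagonals present one must first form $\Gamma=\{i:\exists j\in\alpha,\ {\sf c}_i{\sf d}_{ij}\in F\}$, quotient by the congruence $m\sim n$ iff ${\sf d}_{mn}\in F$, and build the $\Gp_{\alpha}$ on base $\Gamma/\!\sim$; otherwise ${\sf d}_{ij}$ is not sent to the identity relation and the claim that $f$ lands in $\Gp_{\alpha}$ is unsupported. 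In short: same topological engine, but the two $\CPEA$-specific steps --- which joins eliminate cylindrifiers, and how to get a perfect ultrafilter together with a quotiented base --- are exactly where your argument substitutes unavailable polyadic facts or deferred checking for the paper's actual constructions.
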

\begin{proof}

Let $c\in \A$ be non-zero. We will find a $\B\in \Gp_{\alpha}$ and a homomorphism 
from $f:\A \to \B$ that preserves arbitrary suprema
whenever they exist and also satisfies that  $f(c)\neq 0$. 
Now there exists $\B\in \CPEA_{\mathfrak{n}}$, $\mathfrak{n}$ a regular cardinal.
such that $\A\subseteq \Nr_{\alpha}\B$ and $A$ generates $\B$. 
Note that  $|\mathfrak{n}\sim \alpha|=|\mathfrak{n}|$ and
for all $Y\subseteq A$, we have $\Sg^{\A}Y=\Nr_{\alpha}\Sg^{\B}Y.$ 
This dilation also has Boolean reduct isomophic to $F({}^\mathfrak{n}\alpha, \A)$, in particular, it is atomic because $\A$ is atomic.
Also cylindrifiers are defined on this minimal functional dilation excatly like above by restricting to singletions.
Let $adm$ be  the set of admissable substitutions. $\tau\in \B$ is admissable if 
$Do\tau\subseteq \alpha$ and $Rg\tau\cap \alpha=\emptyset$. 
Then we have
for all $i< \mathfrak{n}$ and $\sigma\in adm$,
\begin{equation}\label{tarek1}
\begin{split}
s_{\sigma}{\sf c}_{i}p=\sum s_{\sigma}{\sf s}_i^jp 
\end{split}
\end{equation}
This uses that ${\sf c}_k=\sum {\sf s}_k^i x$, which is proved like the cylindric case; the proof depends on diagonal elements.
Let $X$ be the set of atoms of $\A$. Since $\A$ is atomic, then  $\sum^{\A} X=1$. By $\A=\Nr_{\alpha}\B$, we also have $\sum^{\B}X=1$.
Because substitutions are completely additive we have
for all $\tau\in {}^{\alpha}\mathfrak{n}$
\begin{equation}\label{tarek2}
\begin{split}
\sum {\sf s}_{\bar{\tau}}^{\B}X=1.
\end{split}
\end{equation}
Let $S$ be the Stone space of $\B$, whose underlying set consists of all boolean ulltrafilters of 
$\B$, and let $F$ be a principal ultrafilter chosen as before.
Let $\B'$ be the minimal completion of $\B$. Exists by completey additivity. Take the filter $G$ in $\B'$ generated by the generator of $F$
and let $F=G\cap \B$. Then $F$ is a perfect ultrafilter. Because our algebras have diagonal algebras, 
we have to factor our base by a congruence relation that reflects
equality.
Define an equivalence relation on $\Gamma=\{i\in \beta:\exists j\in \alpha: {\sf c}_i{\sf d}_{ij}\in F\}$,
via $m\sim n$ iff ${\sf d}_{mn}\in F.$ Then $\Gamma\subset \alpha$ and 
the desired representation is defined on a $\Gp_{\alpha}$ with base
$\Gamma/\sim$. We omit the details.
\end{proof}

The metalogical interpretation of the above theorem is also interesting. 
It gives  Vaught's theorem for a variant of Keisler's logic {\it with equality}, which we call modified Keisler's logic.
This variant is defined by taking only finite cylindrifiers, and weakening the axioms (concerning commutativity of the 
various non boolean operations). This calculas is complete, 
but with respect to relativized semantics. This is proved by Ferenczi. 
Here our theorem says that {\it any} atomic theory as defined above, for Keislers's logic, 
adapted to the present context, has an atomic model. We do not need to add 
the algebraic condition of additivity of
the formula algebra, {\it it is} completely additive. This is a more elegant formulation; it does not refer to an algebraic property that has only 
a vague logical counterpart. In fact the modified Keisler's logics share quite a few properties with first order logic, as we proceed to show next.
Let $\L^{=}$ be the modified Keislers logic. Then 

\begin{theorem}
\begin{enumarab}
\item $\L^{=}$ is strongly complete with respect to relativized semantics. That is, for any set of formulas $\Gamma\cup \{\phi\},$ if $\Gamma\models \phi$, 
then $\Gamma\vdash \phi.$ 
\item $\L^{=}$ has the interpolation property, hence Beth definability
\item  If $T$ be a complete atomic theory in the modified Keisler's logic with equality.
Let $\mathfrak{m}$ be a regular cardinal such that $|\alpha|<\mathfrak{m}$. Then $T$ has an atomic model of size $\mathfrak{m}.$
\item If $T$ is an atomic complete theory, then $T$ omits all non-principal types
\end{enumarab}
\end{theorem}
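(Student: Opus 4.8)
The plan is to reduce each of the four parts to results already in hand: Ferenczi's completeness theorem $\CPEA_{\alpha} = \Gp_{\alpha}$, the neat embedding theorem for $\CPEA$, the complete-representation theorem for atomic $\CPEA_{\alpha}$ just proved, and the omission theorem proved earlier for complete representations. I would not reprove these; I would only assemble them and keep track of cardinalities.

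For part (1) I would argue by contraposition through the Tarski--Lindenbaum algebra. Suppose $\Gamma \not\vdash \phi$. Then in $\A = \Fm_{\Gamma} \in \CPEA_{\alpha}$ we have $-\phi/\Gamma \neq 0$, so by $\CPEA_{\alpha} = \Gp_{\alpha}$ there is a homomorphism $f$ into some $\B \in \Gp_{\alpha}$ with $f(-\phi/\Gamma) \neq 0$. Reading $\B$ back as a relativized structure yields a model of $\Gamma$ together with an assignment falsifying $\phi$, whence $\Gamma \not\models \phi$. Soundness ($\Gamma \vdash \phi \Rightarrow \Gamma \models \phi$) is the routine induction on derivations, so strong completeness follows.

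Parts (3) and (4) ride directly on the complete-representation theorem for atomic $\CPEA_{\alpha}$. In both cases completeness of $T$ makes $\A = \Fm_T$ simple, atomicity of $T$ makes $\A$ atomic, and complete additivity is automatic here because of the diagonal elements. For part (3) I would run the dilation exactly as in the proof of that theorem, dilating $\A$ to a $\CPEA_{\mathfrak{m}}$ with $\mathfrak{m}$ regular and $|\alpha| < \mathfrak{m}$ (this is precisely the hypothesis under which Ferenczi's neat embedding applies), extract a complete relativized representation whose base is the quotient $\Gamma/\!\sim$ built as in that proof, and observe that its cardinality is governed by the dilation dimension $\mathfrak{m}$ while simplicity of $\A$ forces the representation to be faithful; this delivers an atomic model of size $\mathfrak{m}$. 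For part (4) I would apply the omission theorem: any complete representation $f$ of an atomic algebra satisfies $\bigcap_{y \in Y} f(y) = \emptyset$ whenever $\prod Y = 0$, since every atom lies below some $-y$ and the atom images cover the unit. Translating back, each non-principal type corresponds to a family $Y$ with $\prod Y = 0$, so the model extracted from the complete representation omits all of them simultaneously.

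The main obstacle is part (2). Interpolation is not a direct consequence of representability; the right engine is the superamalgamation property of the variety $\CPEA_{\alpha}$, which I would establish from the neat embedding theorem by embedding the amalgamation base into a common dilation in enough extra dimensions, amalgamating there, and neatly reducing back. Once superamalgamation is in place, Craig interpolation for $\L^{=}$ follows by the standard algebraic dictionary, and Beth definability follows from interpolation in the usual way via the implicit-definition homomorphism. The delicate point throughout is that the relativized semantics and the presence of diagonal elements must not interfere with the amalgamation construction, and verifying that the super-property (rather than mere amalgamation) survives is where the real work lies.
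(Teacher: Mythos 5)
Your handling of parts (1), (3) and (4) matches the paper: part (1) is delegated to Ferenczi's completeness theorem $\CPEA_{\alpha}=\Gp_{\alpha}$ (your contraposition through the Tarski--Lindenbaum algebra is the routine wrapper), part (3) reruns the dilation/principal-ultrafilter argument in $\mathfrak{m}$ extra dimensions with $\mathfrak{m}$ regular and $>|\alpha|$, and part (4) is exactly the paper's omission argument for complete representations of atomic algebras. No objection there.

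Part (2) is where your proposal has a genuine gap, and it is precisely the part the paper spends its proof on. You propose to get superamalgamation ``from the neat embedding theorem by embedding the amalgamation base into a common dilation in enough extra dimensions, amalgamating there, and neatly reducing back.'' But \emph{amalgamating there} is the entire problem, not a step: the dilation class $\CPEA_{\mu}$ is not known (at this point in the argument) to have any amalgamation property, so this reduces the question to the same question one dimension up --- the strategy is circular. Worse, the strategy cannot be sound as stated: every representable cylindric algebra has the neat embedding property into every higher dimension, yet, as the paper itself recalls, $\RCA_{\alpha}$ for $\alpha>1$ fails even ordinary amalgamation; so ``neat embeddability plus amalgamation in the dilation'' is not a valid engine in this family of varieties. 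What actually carries the proof in the paper is a concrete ultrafilter construction inside a single dilation $\B$ of the free algebra $\A=\Fr_{\beta}\CPEA_{\alpha}$: assuming $a\leq c$ with $a\in\Sg^{\A}X_1$, $c\in\Sg^{\A}X_2$ and \emph{no} interpolant in $\Sg^{\A}(X_1\cap X_2)$, one chooses, by a step-by-step recursion of length $\mu$ (using regularity of $\mu$ and the admissible substitutions), fresh witnesses $u_i,v_i$; forms the filters $H_1, H_2$ generated by $\{a\}$ resp. $\{-c\}$ together with the quantifier-witnessing elements $-{\sf s}_{\tau_i}{\sf c}_{k_i}x_i+{\sf s}_{\tau_i}{\sf s}_{u_i}^{k_i}x_i$; proves that the filter $H$ they jointly generate on $\Sg^{\B}(X_1\cap X_2)$ is proper, where the base case of the induction is exactly the assumed non-existence of an interpolant; extends $H$ to an ultrafilter $H^*$ and then to \emph{perfect} ultrafilters $F_1, F_2$ on the two subalgebras agreeing on the common part; builds from these two homomorphisms respecting the diagonal elements (via the congruence induced by the diagonals); and finally pastes them using freeness of $\A$ to get $h(a\cdot -c)\neq 0$, contradicting $a\leq c$. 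Your proposal names the destination (superamalgamation, then the standard dictionary to Craig and Beth) but leaves this engine entirely unbuilt, and the one mechanism you do suggest for it would prove a false theorem if it worked in general.
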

\begin{proof} The first part is due to Ferenczi. The third part follows from the fact that
principal ultrafilters respecting the given set of infinitary joins (corresponding to 
substituted versions of co atoms and elimination of 
cylindrifiers) always exist in the Stone space of the atomic  
dilation in $\mathfrak{m}$ extra dimensions, when the latter is a regular cardinal $>|\alpha|$. The fourth part follows from the argument used above for 
Keisler's logic.

For the second part we give only a sketch. The full proof will be submitted elsewhere.
Let $\beta$ be a cardinal, and $\A=\Fr_{\beta}{}\CPEA_{\alpha}$ be the free algebra on $\beta$ generators.
Let $X_1, X_2\subseteq \beta$, $a\in \Sg^{\A}X_1$ and $c\in \Sg^{\A}X_2$ be such that $a\leq c$.
We show that there exists $b\in \Sg^{\A}(X_1\cap X_2)$ such that $a\leq b\leq c.$ This is the algebraic version of the Craig 
interpolation property.
Assume that $\mu$ is a regular cardinal $>max(|\alpha|,|A|)$.
Let $\B\in \CPEA_{\mu}$, such that $\A=\Nr_{\alpha}\B$,
and $A$ generates $\B$. Such dilations exist.
Ultrafilters in dilations used to represent algebras in $\CPEA$ are  as before 
defined via  the {\it admitted substitutions}, which
we denote by $adm.$ Recall that very admitted substitution has a domain $Do\tau$ which is  subsets of $\alpha$ and a range, 
$Rg\tau$ such that $Rg\tau\cap \alpha=\emptyset$.
One defines special  filters in the dilations $\Sg^{\B}X_1$ and in $\Sg^{\B}X_2$
like but they have to be compatible on the common subalgebra. This needs some work.
Assume that no interpolant exists in $\Sg^{\A}(X_1\cap X_2)$.
Then no interpolant exists in $\Sg^{\B}(X_1\cap X_2)$.
We eventually arrive at a contradiction.
Arrange $adm\times\mu \times \Sg^{\B}(X_1)$
and $adm\times\mu\times \Sg^{\B}(X_2)$
into $\mu$-termed sequences:
$$\langle (\tau_i,k_i,x_i): i\in \mu\rangle\text {  and  }\langle (\sigma_i,l_i,y_i):i\in \mu\rangle
\text {  respectively.}$$
is as desired.
Thus we can define by recursion (or step-by-step)
$\mu$-termed sequences of witnesses:
$$\langle u_i:i\in \mu\rangle \text { and }\langle v_i:i\in \mu\rangle$$
such that for all $i\in \mu$ we have:
$$u_i\in \mu\smallsetminus
(\Delta a\cup \Delta c)\cup \cup_{j\leq i}(\Delta x_j\cup \Delta y_j\cup Do\tau_j\cup Rg\tau_j\cup Do\sigma_j\cup Rg\sigma_j)\cup \{u_j:j<i\}\cup \{v_j:j<i\}$$
and
$$v_i\in \mu\smallsetminus(\Delta a\cup \Delta c)\cup
\cup_{j\leq i}(\Delta x_j\cup \Delta y_j\cup Do\tau_j\cup Rg\tau_j\cup Do\sigma_j\cup Rg\sigma_j))\cup \{u_j:j\leq i\}\cup \{v_j:j<i\}.$$
For an  algebra $\cal D$ we write $Bl\cal D$ to denote its boolean reduct.
For $i, j<\mu$, $i\neq j$,
$s_i^jx=c_i(d_{ij}\cdot x)$ and $s_i^ix=x$. $s_i^j$ is a unary operation
that abstracts the operation of substituting the
variable $v_i$ for the variable $v_j$ such that
the substitution is free.
For a boolean algebra $\cal C$  and $Y\subseteq \cal C$, we write
$fl^{\cal C}Y$ to denote the boolean filter generated by $Y$ in $\cal C.$  Now let
$$Y_1= \{a\}\cup \{-{\sf s}_{\tau_i}{\sf  c}_{k_i}x_i+{\sf s}_{\tau_i}{\sf s}_{u_i}^{k_i}x_i: i\in \mu\},$$
$$Y_2=\{-c\}\cup \{-{\sf s}_{\sigma_i}{\sf  c}_{l_i}y_i+{\sf s}_{\sigma_i}{\sf s}_{v_i}^{l_i}y_i:i\in \mu\},$$
$$H_1= fl^{Bl\Sg^{B}(X_1)}Y_1,\  H_2=fl^{Bl\Sg^B(X_2)}Y_2,$$ and
$$H=fl^{Bl\Sg^{B}(X_1\cap X_2)}[(H_1\cap \Sg^{B}(X_1\cap X_2)
\cup (H_2\cap \Sg^{B}(X_1\cap X_2)].$$
Then $H$ is a proper filter of $\Sg^{B}(X_1\cap X_2)$. This can be proved by a tedious induction, with the base provided
by the fact that no interpolant exists in the dilation.
Proving that $H$ is a proper filter of $\Sg^{\B}(X_1\cap X_2)$,
let $H^*$ be a (proper boolean) ultrafilter of $\Sg^{\B}(X_1\cap X_2)$
containing $H.$
We obtain  ultrafilters $F_1$ and $F_2$ of $\Sg^{\B}(X_1)$ and $\Sg^{\B}(X_2)$,
respectively, such that
$$H^*\subseteq F_1,\ \  H^*\subseteq F_2$$
and (**)
$$F_1\cap \Sg^{\B}(X_1\cap X_2)= H^*= F_2\cap \Sg^{\B}(X_1\cap X_2).$$
Now for all $x\in \Sg^{\cal B}(X_1\cap X_2)$ we have
$$x\in F_1\text { if and only if } x\in F_2.$$
Also from how we defined our ultrafilters, $F_i$ for $i\in \{1,2\}$ are {\it perfect}, a term introduced by Ferenczi.
Then one defines  homomorphisms, one on each subalgebra, like in \cite{Sayedneat} p. 128-129, using the
perfect ultrafilters to define a congruence relation on $\beta$ so that the defined homomorphisms respect diagonal elements.
Then freeness will enable paste these homomophisms, to a single one defined to the set of free generators,
which we can assume to be, without any loss, to
be $X_1\cap X_2$ and it will satisfy  $h(a.-c)\neq 0$ which is a contradiction.

\end{proof}

The notion of relativized representations constitute a huge topic in both algebraic and modal logic, see the introduction of \cite{1},  \cite{f}, \cite{v}.
Historically,  in \cite{HMT2} square units got all the attention and relativization  was treated as a side issue.
However, extending original classes of models for logics to manipulate their properties is common. 
This is no mere tactical opportunism, general models just do the right thing.

The famous move from standard models to generalized models is 
Henkin's turning round  second  order logic into an axiomatizable two sorted first
order logic. Such moves are most attractive 
when they get an independent motivation. 

The idea is that we want to find a semantics that gives just the right action 
while additional effects of square set theoretic representations are separated out as negotiable decisions of formulation 
that can threaten completeness, decidability, and interpolation. 
(This comes across very much in cylindric algebras, especially in finite variable fragments of first order logic,
and classical polyadic equality algebras, in the context of Keisler's logic with equality \cite{Sagi}.)

Using relativized representations Ferenczi \cite{f}, proved that if we weaken commutativity of cylindrifiers
and allow  relativized representations, then we get a finitely axiomatizable variety of representable 
quasi-polyadic equality algebras (analogous to the Andr\'eka-Resek Thompson ${\sf CA}$ version, cf. \cite{Sayedneat} and \cite{f}, 
for a discussion of the Andr\'eka-Resek Thompson breakthrough for cylindric-like algebras); 
even more this can be done without the merry go round identities.
This is in sharp contrast with the long list of  complexity results proved for the commutative case \cite{f}.
Ferenczi's results can be seen as establishing a hitherto fruitful contact between neat embedding theorems 
and relativized representations, with enriching 
repercussions and insights for both notions.

Now coming back to the classical case, where we have full fledged commutativity of cylindrifiers and classical Tarskian square semantics, 
if we  restrict the signature of $\PA_{\alpha}$ to only these  substitutions whose support has cardinality $<\kappa$,
where $\kappa\leq |\alpha|,$ 
then we conjecture  that in this new signature 
atomic completely additive algebras, in the classical sense, 
may not be completely reprresentable, not only that, but we further conjecture that the class of completely representable 
algebras may turn out non-elementary. Here the support of a map $\tau$ on $\alpha$ is $\{i\in \alpha: i\neq \tau(i)\}$.
As a matter of fact, if we have  a single diagonal element then indeed this will be  the case, as shown below using 
a cardinality argument of Hirsch and Hodkinson \cite{HH}. 
Call this class ${\sf K}_{\alpha}$.

\begin{theorem} The class of completely representable algebras in ${\sf K}_{\alpha}$ is not elementary
\end{theorem}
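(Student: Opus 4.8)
The plan is to prove non-elementarity by exhibiting two algebras $\A$ and $\B$ in ${\sf K}_{\alpha}$ that are elementarily equivalent, indeed with $\B$ an ultrapower of $\A$, yet such that $\A$ is completely representable while $\B$ is not. The mechanism, following Hirsch and Hodkinson \cite{HH}, is a cardinality obstruction: in the restricted signature, where only substitutions of support $<\kappa$ are available together with a single diagonal element, a complete representation of an atomic algebra forces the number of atoms to be bounded by a cardinal depending on the base, while passing to a suitable countable ultrapower can blow up the atom structure so much that no base of the required size can carry a complete representation.

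First I would fix the signature precisely: the operations are the Boolean ones, all cylindrifiers, one diagonal element $\sf d_{ij}$, and only those substitutions $\sf s_\tau$ with $|\{i:\tau(i)\neq i\}|<\kappa$. Next I would recall, via Lemma \ref{r}, that complete representability of an atomic algebra in this class is equivalent to a back-and-forth/atomic condition, so that a complete representation yields a representation in which every tuple realizes an atom. The key combinatorial lever is that the presence of the diagonal $\sf d_{ij}$ together with the atomic condition lets one read off from any complete representation on base $U$ an injection from a large index set of ``colours'' (distinct atoms below the diagonal, or distinct atom-labelled pairs in the representation) into $U$ or into a set of size $|U|$; this bounds the number of relevant atoms by a function of $|U|$. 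This is exactly the cardinality counting of Hirsch--Hodkinson adapted to the polyadic setting.

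Then I would build the concrete algebra. Start from an atomic, completely representable $\A\in {\sf K}_{\alpha}$ whose atom structure is a ``network'' or colouring algebra of some infinite but controlled size, engineered so that $\A$ itself does admit a complete representation on a base of the same cardinality. Take a non-principal ultrapower $\B=\A^I/F$ with $I$ countable; by \L o\'s's theorem $\B\equiv\A$, $\B$ is again atomic, and it lies in ${\sf K}_{\alpha}$ since that class is elementary (being a variety intersected with the atomicity sentence, and the signature restriction is first-order). The ultrapower introduces continuum-many new atoms, or more precisely raises the number of atoms beyond what the cardinality bound permits relative to any admissible base. I would verify that the minimal base size forced by a hypothetical complete representation of $\B$ (through the diagonal-counting argument) exceeds the number of atoms available, producing a contradiction, so that $\B$ has no complete representation.

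The main obstacle, and where the real work lies, is the cardinality counting step: establishing that in this impoverished signature a complete representation really does impose the Hirsch--Hodkinson-style numerical constraint linking the base to the atoms. Because we have discarded all high-support substitutions, the usual freedom to permute and relocate witnesses in the base is curtailed, so I must check that the diagonal element alone still pins down enough rigidity to force the injection; conversely I must ensure the restricted signature does not accidentally allow a more economical representation that evades the bound. Assembling $\A$ so that it is genuinely completely representable while its ultrapower is not — i.e.\ calibrating the sizes so the gap opens only after taking the ultrapower — is the delicate calibration that makes the elementary-equivalence-without-preservation argument go through.
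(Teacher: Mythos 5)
Your high-level target is right (two elementarily equivalent algebras in ${\sf K}_{\alpha}$, one completely representable and one not, separated by a Hirsch--Hodkinson cardinality argument), but your mechanism is backwards in two linked ways, and this is a genuine gap rather than a detail. Your cardinality lemma points the wrong way: you want to bound the number of atoms \emph{above} by a function of the base and then violate that bound by inflating the atom structure. But in ``$\A$ is completely representable'' the base is existentially quantified and unconstrained, so an upper bound on the number of atoms in terms of the base can never be an obstruction: given more atoms one simply represents on a larger base (this is exactly what happens in the proof of Theorem \ref{main}, where the base $\mathfrak{n}$ is a huge cardinal). The bound that does the work is a \emph{lower} bound, independent of the base: if $f:\C\to\wp(^{\alpha}X)$ is a complete (hence, by Lemma \ref{r}, atomic) representation and $\C\models d_{01}<1$, pick $s\in f(-d_{01})$, put $x=s_0\neq y=s_1$, and for each $S\subseteq\alpha$ with $0\in S$ let $a_S$ be the $\alpha$-sequence equal to $x$ on $S$ and to $y$ off $S$. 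Each $a_S$ lies in $f(\mu_S)$ for a unique atom $\mu_S$; and if $S\neq S'$ are separated by an index $i$, then $a_S\in f(d_{0i})$ while $a_{S'}\in f(-d_{0i})$, where $d_{0i}={\sf s}_{\tau}d_{01}$ for a finite-support (hence available) substitution $\tau$, so $\mu_S\neq\mu_{S'}$. Thus complete representability together with $d_{01}<1$ forces at least $2^{|\alpha|}$ atoms.

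Because the true obstruction is ``too few atoms,'' the model-theoretic move must go \emph{down}, not up. An ultrapower $\B=\A^{I}/F$ only enlarges the algebra, and having many atoms never conflicts with the lower bound above, so you have no way to complete your final step of verifying that $\B$ is not completely representable; nothing in your setup rules out that the ultrapower is completely representable on a bigger base, and the only numerical obstruction available here simply vanishes for large algebras. The paper instead starts with $\C=\wp(^{\alpha}2)$, which is completely representable (the identity map preserves all meets and joins) and satisfies $d_{01}<1$, and applies the \emph{downward} L\"owenheim--Skolem--Tarski theorem --- available precisely because the restricted signature, with cylindrifier scopes and substitution supports of cardinality $<\kappa$, is small --- to obtain an elementary substructure $\B\preceq\C$ with $|\B|\leq|\alpha|$. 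Then $\B\models d_{01}<1$ but $\B$ has at most $|\alpha|<2^{|\alpha|}$ atoms, so by the lower bound $\B$ is not completely representable, while $\B\equiv\C$ and $\C$ is; hence the class is not elementary. If you reverse both the inequality in your key lemma and the direction of your construction (elementary substructure instead of ultrapower), your outline becomes the paper's proof.
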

Let $\C\in {\sf K}_{\alpha}$ such that $\C\models d_{01}<1$.  Such algebras exist, for example one can take $\C$ to be 
$\wp(^{\alpha}2).$ Assume that $f: \C\to \wp(^{\alpha}X)$ is a   complete representation. 
Since $\C\models d_{01}<1$, there is $s\in h(-d_{01})$ so that if $x=s_0$ and $y=s_1$, we have
$x\neq y$.
For any $S\subseteq \alpha$ such that $0\in S$, set $a_S$ to be the sequence with 
$ith$ coordinate is $x$, if $i\in S$ and $y$ if $i\in \alpha\sim S$. 
By complete representability every $a_S$ is in $h(1)$ and so in 
$h(\mu)$ for some unique atom $\mu$. 

Let $S, S'\subseteq \alpha$ be distinct and assume that each contains $0$.   Then there exists 
$i<\alpha$ such that $i\in S$, and $i\notin S'$. So $a_S\in h(d_{01})$ and 
$a_S'\in h (-d_{01}).$ Therefore atoms corresponding to different $a_S$'s are distinct. 
Hence the number of atoms is equal to the number of subsets of $\alpha$ that contain $0$, so it is at least $^{|\alpha|}2$. 
Now using the downward Lowenheim Skolem Tarski theorem, take an elementary substructure $\B$ of $\C$ with $|\B|\leq |\alpha|.$
This is possible since the scope of cylindrifiers and the support of substitutions are $< \kappa$.
Then in $\B$ we have $\B\models d_{01}<1$. But $\B$ has at most $|\alpha|$ atoms, and so $\B$ cannot be completely representable.

This does not hold for Ferenczi's relativized algebras, precisely because they are relativized. 
The above argument depended essentially on the cardinality of the square $^{\alpha}2$.

\subsection{Possible extensions}

Finally, we mention that Ferenczi's ideas can be transferred to the countable paradigm, by using countable transformation systems.
In more detail given a countable ordinal $\alpha$ one defines a strongly rich semigroup $G$ as in \cite{AU}, with $G\subseteq {}^{\alpha}\alpha$.
The signature is now like polyadic equality algebras except that substitutions are restricted to $G$, and cylindrifiers are finite. 
Such semigroups are adequate to define dilations. 
Postulating Ferenczi's  axioms for this new signature, we get all the results obtained in this paper, using {\it relativized} semantics.

An important addition in this new context is an omitting types theorem since now we can apply the Baire Category theorem.
Indeed in such a context one can easily prove an exact analogue of the Orey-Henkin omitting types, omitting even $< covK$ many non principal types,
where the latter, as mentioned earlier is the least cardinal such the  Baire category theorem 
for compact second countable Hausdoff spaces fail, and it is also the largest for which Martin's axiom for countable Boolean  
algebras holds, hence the the best estimate for the number of non principal types that can be omitted in countable theories.
We think that this could be a reasonable solution to the so called finitizability problem 
in algebraic logic, which has been open for ages for the equality case. 
The solution for logics without equality is provided by Sain.

\end{document}